\def\RR{\mathbb{R}}
\newcommand\tr{\operatorname{trace}}
\def\Ric{\operatorname{Ric}}
\newtheorem{corollary}{Corollary}
\newtheorem{definition}{Definition}
\newtheorem{example}{Example}
\newtheorem{remark}{Remark}
\newtheorem{lemma}{Lemma}
\newtheorem{proposition}{Proposition}
\newtheorem{theorem}{Theorem}
\author{Mirjana Djori\'c \footnote{
University of Belgrade, Faculty of Mathematics, Studentski trg 16, 11000 Belgrade, Serbia
       \newline e-mail: {\tt mdjoric@matf.bg.ac.rs}}
       \ and\
       Vladimir Rovenski \footnote{University of Haifa, Department of Mathematics, Mount Carmel, 3498838, Haifa, Israel
       \newline e-mail: {\tt vrovenski@univ.haifa.ac.il}
       }
}
\title{Geometric inequalities for CR-submanifolds}
\begin{document}

\date{}

\maketitle

\begin{abstract}
We study two kinds of curvature invariants of Riemannian manifold equip\-ped with a complex distribution $D$
(for example, a CR-submanifold of an almost Hermitian manifold) related to sets of pairwise orthogonal subspaces of the distribution.
One kind of invariant is based on the mutual curvature of the subspaces and another is similar to Chen's $\delta$-invariants.
We compare the mutual curvature invariants with Chen-type invariants
and prove geometric inequalities with intermediate mean curvature squared for CR-submanifolds in almost Hermitian spaces.
In the case of a set of complex planes, we introduce and study
curvature invariants based on the concept of holomorphic bisectional curvature.
As~applications, we give consequences of the absence of some $D$-minimal CR-submanifolds in almost Hermitian~manifolds.

\vskip1.5mm\noindent
\textbf{Keywords}:
almost Hermitian manifold;
CR-submanifold;
distribution;
mutual curvature;
mean curvature

\vskip1.5mm\noindent
\textbf{Mathematics Subject Classifications (2010)}
53C12; 53C15; 53C42
\end{abstract}

\section{Introduction}

In 1978, A. Bejancu introduced the notion of a CR-submanifold as a generalization of holomorphic and totally real submanifolds of almost Hermitian manifolds.
Since then, CR-submanifolds in various ambient spaces have been actively studied, for example,~\cite{CSV-24,DO-2010,YK-1983}.

The development of the extrinsic geometry of submanifolds led to the following problem
(for example,~\cite{chen1}): \textit{find a simple optimal connection between the intrinsic and extrinsic invariants of a submanifold
in a Riemannian manifold; in particular, in space~forms}.

B.Y.~Chen introduced the concept of $\delta$-curvature invariants for Riemannian manifolds in the 1990s
and proved an optimal inequality for a subma\-nifold, including $\delta$-curvature invariants and the square of mean curvature.
Chen invariants are obtained from the scalar curvature by discarding some of the sectional~curvatures.
The case of equality led to the notion of ``ideal immersions'' in Euclidean space, that is, isometric immersions with the smallest possible tension.
Chen's theory was extended by geometers to K\"{a}hler, (para-)contact, Lagrangian and affine submanifolds, warped products and submersions, see~\cite{chen1,CSV-24}.

\smallskip

In \cite{r-w-2022,rov-112}, we introduced invariants of a Riemannian manifold,
which are related to the mutual curvature of noncomplementary pairwise orthogonal subspaces of the tangent bundle,
and proved
geometrical
inequalities for Riemannian submanifolds with applications to foliations.

In the paper, we study two kinds of curvature invariants of a Riemannian manifold equipped with a complex distribution $D$
(in particular, a CR-submanifold of an almost Hermitian mani\-fold), related to sets of pairwise orthogonal subspaces of the distribution.
One kind of invariant is based on the mutual curvature of the subspaces and the other is similar to Chen's $\delta$-invariants.
We compare the mutual curvature invariants with Chen-type invariants
and prove geometric inequalities with intermediate mean curvature squared for CR-submanifolds in almost Hermitian spaces.
In the case of a set of complex planes, we introduce and study curvature invariants based on the holomorphic bisectional curvature of two planes.
As~applications, we provide the consequences of the absence of some ${D}$-minimal CR-submanifolds.

The paper is organized as follows.
In Section~\ref{sec:02}, we report some basic information about the curvature invariants of a manifold with a distribution.
In Section~\ref{sec:03}, we study geometric inequalities for CR-submanifolds in almost Hermitian manifolds.

\section{Curvature invariants of a manifold with a distribution}
\label{sec:02}

In this section, we recall
two kinds of curvature invariants of a manifold with a distribution (Chen-type invariants and invariants based on the mutual curvature, see \cite{r-w-2022,rov-112}), and
for the complex distribution we define invariants based on the holomorphic bisectional curvature.
Let $(M^{d+l},g)\ (d,m>0)$ be a Riemannian manifold with a $d$-dimensional distribution $D$.
Denote by $\nabla$ the Levi-Civita connection of $g$ and
$R_{X,Y} = \nabla_{X}\nabla_{Y} -\nabla_{Y}\nabla_{X} -\nabla_{[X,\, Y]}$ the curvature tensor,
where $X,Y$ are any vector fields on the tangent bundle $TM$.
The~scalar curvature $\tau$ (function on $M$) is the trace of the Ricci tensor $\Ric_{X,Y}=\tr( Z\mapsto R_{Z,X}\,Y)$.
Some authors, for example, \cite{chen1}, define the scalar curvature as half of ``trace~Ricci''.

\begin{example}\rm
Let $g$ be an admissible metric for an almost contact manifold $M^{2n+1}(\varphi,\xi,\eta)$,
\begin{equation*}
 g(\varphi X,\varphi Y)= g(X,Y) -\eta(X)\,\eta(Y),\quad
 \eta(\xi)=1,\quad X,Y\in\mathfrak{X}_M,
\end{equation*}
see \cite{blair2010riemannian},
where $\varphi$ is a $(1,1)$-tensor, $\xi$ is a unit vector field (called Reeb vector field) and $\eta$ is a
1-form. Then $d=2n$ and  $D=\ker\eta$ is a $2n$-dimensional contact distribution on $M^{2n+1}$.
\end{example}

Next, we define curvature invariants related with $D$, see \cite{rov-112}, which for $D=TM$ are reduced to Chen's $\delta$-invariants, for example, \cite[Section~13.2]{chen1}.

\begin{definition}
\rm
Define Chen's type curvature invariants $\delta_D$ and $\hat\delta_D$ by
\begin{eqnarray}\label{E-ineq1-D}
\nonumber
 2\,\delta_D(n_1,\ldots, n_k)(x)=\tau_D(x)-\min\{\tau(V_1)+\ldots +\tau(V_k)\} ,\\
 2\,\hat\delta_D(n_1,\ldots, n_k)(x)=\tau_D(x)-\max\{\tau(V_1)+\ldots +\tau(V_k)\},
\end{eqnarray}
where $\tau(V_i)=\tr_g\Ric|_{V_i}$,
and $V_1,\ldots,V_k$ run over all $k\ge0$ mutually orthogonal subspaces of $D_x$ at $x\in M$ such that $\dim V_i=n_i\ (0\le i\le k)$.
\end{definition}

For example, $2\,\delta_D=\tau_D$ if $k=0$, and
$2\,\delta_D(n_1)(x)=\tau_D(x)-\min\tau(V_1)$
if $k=1$.

\begin{remark}\rm
In \eqref{E-ineq1-D}, we use max and min (instead of sup and inf, see \cite{chen1}) since the set ``all mutually orthogonal subspaces $V_1,\ldots,V_k$ at a point $x\in M$ such that ..." is~compact.
\end{remark}

Let $\{e_i\}$ be an orthonormal frame of a subspace $V=\bigoplus_{\,i=1}^{\,k} V_i$ of $T_x M$ such that
\[
 \{e_1,\ldots, e_{n_1}\}\subset V_1,\
 \ldots,\
 \{e_{n_{k-1}+1},\ldots, e_{n_k}\}\subset V_k.
\]
For $k\ge2$, the \textit{mutual curvature of a set} $\{V_1,\ldots,V_k\}$ is defined~by
\begin{equation}\label{E-mixed-k}
{\rm S}_{\rm m}(V_1,\ldots,V_k) = \sum\nolimits_{\,i<j} {\rm S}_{\rm m}(V_i, V_j),
\end{equation}
where
\[
 {\rm S}_{\rm m}(V_i, V_j) = \sum\limits_{\,n_{i-1}<a\,\le n_i,\ n_{j-1}<b\le n_j} g(R_{e_a,{e}_b}\,e_b,{e}_a)
\]
is the \textit{mutual curvature} of $(V_i, V_j)$.
Note that ${\rm S}_{\rm m}(V_1,\ldots,V_k)$ does not depend on the choice of frames.
We~get
\begin{equation}\label{E-Smix-3}
 \tau(V) = 2\,{\rm S}_{\rm m}(V_1,\ldots, V_k) +\sum\nolimits_{\,i=1}^k \tau(V_i),
\end{equation}
where $\tau(V)=\tr_g\Ric|_{\,V}$ is
the trace of the Ricci tensor on
$V=\bigoplus_{\,i=1}^{\,k} V_i$.
For example, if all subspaces $V_i$ are one-dimensional, then $2\,{\rm S}_{\rm m}(V_1,\ldots, V_k)=\tau(V)$.

We introduce the curvature invariants based on the concept of mutual curvature.

\begin{definition}\label{D-03}\rm
Define the \textit{mutual curvature invariants} of
a Riemannian manifold $(M^{d+l},g)$ equipped with a $d$-dimensional distribution $D$ by, see \cite{rov-112},
\begin{eqnarray}\label{E-delta-m}
\nonumber
 \delta^+_{{\rm m},D}(n_1,\ldots, n_k)(x) = \max {\rm S}_{\rm m}(V_1,\ldots,V_k),\\
 \delta^-_{{\rm m},D}(n_1,\ldots, n_k)(x) = \min {\rm S}_{\rm m}(V_1,\ldots,V_k),
\end{eqnarray}
where $x\in M$ and $V_1,\ldots,V_k$ run over all $k\ge2$ mutually orthogonal subspaces of $D_x$ such that $\dim V_i=n_i$ $(2\le i\le k)$.
For $D=TM$, we get the
invariants~$\delta^\pm_{{\rm m}}=\delta^\pm_{{\rm m},TM}$.
\end{definition}

The invariants in \eqref{E-ineq1-D} and \eqref{E-delta-m} are related by the following inequalities.

\begin{proposition}
For $k\ge2$ and $n_1+\ldots+n_k<d$, the following inequalities hold:
\begin{eqnarray*}
 \delta^+_{{\rm m},{D}}(n_1,\ldots, n_k) \ge \delta_{D}(n_1,\ldots, n_k) -\delta_{D}(n_1+\ldots+n_k)\,,\\
 \delta^-_{{\rm m},{D}}(n_1,\ldots, n_k) \le \hat\delta_{D}(n_1,\ldots, n_k) -\hat\delta_{D}(n_1+\ldots+n_k)\,,
\end{eqnarray*}
and if $n_1+\ldots+n_k=d$, then
\[
 \hat\delta_{D}(n_1,\ldots, n_k)=\delta^-_{{\rm m},D}(n_1,\ldots, n_k)\le
\delta^+_{{\rm m},D}(n_1,\ldots, n_k)=\delta_{D}(n_1,\ldots, \linebreak n_k).
\]
If the sectional curvature $K$ along $D$ satisfies $c\le K\le C$ and $\sum\nolimits_{\,i=1}^k n_i=s\le d$,~then
\begin{eqnarray}\label{E-s1}
\nonumber
  &\frac c2\,(s^2-\sum\nolimits_{\,i} n_i^2) = c\sum\nolimits_{\,i<j} n_i\,n_j \le \delta^-_{{\rm m},D}(n_1,\ldots n_k) \\
  &\le \delta^+_{{\rm m},D}(n_1,\ldots n_k) \le C\sum\nolimits_{\,i<j} n_i\,n_j =\frac C2(s^2-\sum\nolimits_{\,i} n_i^2)\,.
\end{eqnarray}
\end{proposition}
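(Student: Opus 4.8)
The plan is to derive everything from the identity \eqref{E-Smix-3} together with the definitions \eqref{E-ineq1-D} and \eqref{E-delta-m}. First I would fix $x\in M$, write $s=n_1+\ldots+n_k$, and consider an arbitrary family of mutually orthogonal subspaces $V_1,\ldots,V_k\subset D_x$ with $\dim V_i=n_i$; set $V=\bigoplus_i V_i$. Rearranging \eqref{E-Smix-3} gives
\[
 2\,{\rm S}_{\rm m}(V_1,\ldots,V_k) = \tau(V) - \sum\nolimits_{i=1}^k \tau(V_i),
\]
so ${\rm S}_{\rm m}$ is controlled by the gap between the Ricci-trace on the direct sum and on the pieces. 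For the first inequality, I take a family attaining $\delta^+_{{\rm m},D}$; then $2\,\delta^+_{{\rm m},D} = \tau(V) - \sum_i\tau(V_i)$. Now $\tau(V)\ge \tau_D - \max\{\tau(W): \dim W = d-s\}$ is \emph{not} quite what \eqref{E-ineq1-D} gives directly, so instead I bound each term separately: since $V$ itself is a subspace of $D_x$ of dimension $s$, we have $\tau_D - \tau(V)\le \tau_D - \min\tau(W_s) = 2\,\delta_D(s)$ by the $k=1$ case of \eqref{E-ineq1-D}; and since $V_1,\ldots,V_k$ is an admissible $k$-tuple for $\delta_D(n_1,\ldots,n_k)$, we have $\sum_i\tau(V_i)\ge \min\{\sum_i\tau(V'_i)\} = \tau_D - 2\,\delta_D(n_1,\ldots,n_k)$. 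Combining, $2\,\delta^+_{{\rm m},D} = \tau(V)-\sum_i\tau(V_i) \ge (\tau_D - 2\,\delta_D(s)) - (\tau_D - 2\,\delta_D(n_1,\ldots,n_k)) = 2\delta_D(n_1,\ldots,n_k) - 2\delta_D(s)$, which is the claim. The second inequality is the mirror image: take a family attaining $\delta^-_{{\rm m},D}$, use $\tau_D - \tau(V)\ge -2\,\hat\delta_D(s)$ (the $k=1$ max case) and $\sum_i\tau(V_i)\le \tau_D - 2\,\hat\delta_D(n_1,\ldots,n_k)$, and conclude $2\,\delta^-_{{\rm m},D}\le 2\hat\delta_D(n_1,\ldots,n_k) - 2\hat\delta_D(s)$.

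For the equality case $s=d$: here the only subspace of $D_x$ of dimension $d$ is $D_x$ itself, so $\tau(V)=\tau_D$ identically, and \eqref{E-Smix-3} reads $2\,{\rm S}_{\rm m}(V_1,\ldots,V_k) = \tau_D - \sum_i\tau(V_i)$. Thus maximizing ${\rm S}_{\rm m}$ over admissible families is the same as minimizing $\sum_i\tau(V_i)$, giving $2\,\delta^+_{{\rm m},D}(n_1,\ldots,n_k) = \tau_D - \min\sum_i\tau(V_i) = 2\,\delta_D(n_1,\ldots,n_k)$; symmetrically $2\,\delta^-_{{\rm m},D}(n_1,\ldots,n_k) = \tau_D - \max\sum_i\tau(V_i) = 2\,\hat\delta_D(n_1,\ldots,n_k)$. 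The middle inequality $\delta^-_{{\rm m},D}\le\delta^+_{{\rm m},D}$ is immediate from $\min\le\max$ in \eqref{E-delta-m}.

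For the pinching bound \eqref{E-s1}, I expand ${\rm S}_{\rm m}(V_i,V_j) = \sum_{a,b} g(R_{e_a,e_b}e_b,e_a) = \sum_{a,b} K(e_a,e_b)$, a sum of $n_i n_j$ sectional curvatures of planes spanned by vectors in $D$ (note orthogonality of $V_i,V_j$ makes $e_a,e_b$ orthonormal and independent). Hence $c\,n_in_j\le {\rm S}_{\rm m}(V_i,V_j)\le C\,n_in_j$, and summing over $i<j$ gives $c\sum_{i<j}n_in_j\le {\rm S}_{\rm m}(V_1,\ldots,V_k)\le C\sum_{i<j}n_in_j$ for \emph{every} admissible family, so the same bounds pass to the min and the max. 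Finally $2\sum_{i<j}n_in_j = (\sum_i n_i)^2 - \sum_i n_i^2 = s^2 - \sum_i n_i^2$ is the elementary identity used to rewrite the endpoints. The only mild subtlety — and the step I would be most careful about — is making sure that in the first two inequalities the subspace $V=\bigoplus V_i$ of dimension $s<d$ is genuinely a competitor in the $k=1$ invariants $\delta_D(s)$ and $\hat\delta_D(s)$ (it is, since $V\subset D_x$), and that the $k$-tuple realizing $\delta^\pm_{{\rm m},D}$ is simultaneously a competitor for $\delta_D(n_1,\ldots,n_k)$ / $\hat\delta_D(n_1,\ldots,n_k)$; both hold by definition, so no real obstacle remains beyond bookkeeping the factors of $2$.
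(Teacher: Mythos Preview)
Your overall strategy via the identity \eqref{E-Smix-3} is exactly the intended one (the paper itself only cites \cite{rov-112} for the proof), and your treatment of the case $s=d$ and of the pinching estimate \eqref{E-s1} is correct. However, the argument for the first two inequalities contains a genuine logical gap.

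For the first inequality you choose $V_1,\ldots,V_k$ realizing $\delta^+_{{\rm m},D}$ and then derive
\[
\tau(V)\ \ge\ \tau_D - 2\,\delta_D(s),
\qquad
\sum_i \tau(V_i)\ \ge\ \tau_D - 2\,\delta_D(n_1,\ldots,n_k).
\]
But both inequalities point the same way, so they do \emph{not} combine to give a lower bound on the difference $\tau(V)-\sum_i\tau(V_i)$; from $\tau(V)\ge A$ and $\sum_i\tau(V_i)\ge B$ one cannot conclude $\tau(V)-\sum_i\tau(V_i)\ge A-B$. The mirror argument for $\delta^-_{{\rm m},D}$ has the identical defect.

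The fix is simple and stays within your framework: instead of choosing the family that realizes the mutual-curvature extremum, choose the family that realizes the Chen-type extremum. Concretely, let $V_1,\ldots,V_k$ attain the minimum in $\delta_D(n_1,\ldots,n_k)$, so that $\sum_i\tau(V_i)=\tau_D-2\,\delta_D(n_1,\ldots,n_k)$ exactly; the direct sum $V$ is still an $s$-dimensional competitor, giving $\tau(V)\ge \tau_D-2\,\delta_D(s)$. Then
\[
2\,\delta^+_{{\rm m},D}\ \ge\ 2\,{\rm S}_{\rm m}(V_1,\ldots,V_k)
= \tau(V)-\sum_i\tau(V_i)
\ \ge\ 2\,\delta_D(n_1,\ldots,n_k)-2\,\delta_D(s),
\]
where the first inequality uses that $\delta^+_{{\rm m},D}$ is a maximum. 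The second inequality of the proposition is handled symmetrically, choosing the family realizing $\hat\delta_D(n_1,\ldots,n_k)$. With this correction the proof goes through as you outlined.
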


\begin{proof} This is similar to the proof of Proposition~1 in \cite{rov-112}.
\end{proof}

\begin{corollary}
If $(M^{d+l},D,g)$ has non-negative sectional curvature of planes tangent to $D$,
then
\[
 \hat\delta_{D}(n_1,\ldots, n_k) \le \delta^-_{{\rm m},D}(n_1,\ldots, n_k)\le
 \delta^+_{{\rm m},D}(n_1,\ldots, n_k)\le\delta_{D}(n_1,\ldots, n_k),
\]
and if this sectional curvature is nonpositive, then the above inequalities are opposite.
\end{corollary}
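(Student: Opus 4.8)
The plan is to read the corollary off the preceding Proposition, the only extra input being that, under a one-sided bound on the sectional curvature along $D$, the ``correction terms'' occurring there have a definite sign; the underlying computation uses nothing but identity \eqref{E-Smix-3} and the observation that the relevant summands are sectional curvatures of planes tangent to $D$.

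First I would do the sign bookkeeping, assuming the sectional curvature of planes tangent to $D$ is non-negative. Fix $x\in M$ and pairwise orthogonal subspaces $V_1,\dots,V_k\subseteq D_x$ with $\dim V_i=n_i$; put $V=\bigoplus_i V_i$ and $s=n_1+\dots+n_k$, and pick an adapted orthonormal frame. Every term $g(R_{e_a,e_b}e_b,e_a)$ entering ${\rm S}_{\rm m}(V_i,V_j)$ or $\tau(V_i)$ is the sectional curvature of a plane spanned by two vectors of $D_x$, hence is $\ge 0$; so ${\rm S}_{\rm m}(V_1,\dots,V_k)\ge 0$ and $\tau(V_i)\ge 0$. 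Moreover, for any subspace $W\subseteq D_x$, applying \eqref{E-Smix-3} to the pair $\{W,W^{\perp}\}$ inside $D_x$ gives $\tau_D(x)=2\,{\rm S}_{\rm m}(W,W^{\perp})+\tau(W)+\tau(W^{\perp})\ge\tau(W)$, the discarded terms being again sectional curvatures of $D$-planes; thus $\tau(W)\le\tau_D(x)$. Taking $\dim W=s$ in \eqref{E-ineq1-D} with $k=1$ yields $\delta_D(s)(x)\ge 0$ and $\hat\delta_D(s)(x)\ge 0$.

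Next I would assemble the inequalities. The relation $\delta^-_{{\rm m},D}\le\delta^+_{{\rm m},D}$ holds by definition \eqref{E-delta-m}, and $\hat\delta_D\le\delta_D$ by definition \eqref{E-ineq1-D}. For the comparison with the mutual-curvature invariants, split into $s=d$ and $s<d$: if $s=d$ the Proposition already supplies the equalities $\hat\delta_D=\delta^-_{{\rm m},D}$ and $\delta^+_{{\rm m},D}=\delta_D$; if $s<d$, the Proposition's second inequality together with $\hat\delta_D(s)\ge 0$ controls $\delta^-_{{\rm m},D}$ against $\hat\delta_D$, while the comparison of $\delta^+_{{\rm m},D}$ with $\delta_D$ comes directly from \eqref{E-Smix-3}: for every admissible configuration,
\[
 {\rm S}_{\rm m}(V_1,\dots,V_k)=\tfrac12\Big(\tau(V)-\sum\nolimits_i\tau(V_i)\Big)\le\tfrac12\Big(\tau_D(x)-\min\{\tau(W_1)+\dots+\tau(W_k)\}\Big)=\delta_D(n_1,\dots,n_k)(x),
\]
since $\tau(V)\le\tau_D(x)$ and $\sum_i\tau(V_i)\ge\min\{\tau(W_1)+\dots+\tau(W_k)\}$; passing to the maximum over configurations completes this case. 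For non-positive sectional curvature along $D$ the same computation reverses all signs — ${\rm S}_{\rm m}\le 0$, $\tau(V_i)\le 0$, $\tau(W)\ge\tau_D(x)$, $\delta_D(s),\hat\delta_D(s)\le 0$ — so every inequality above is reversed, which is the second assertion.

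The hard part, such as it is, is the repeatedly used bound $\tau(W)\le\tau_D(x)$ for $W\subseteq D_x$: one must be careful that in passing from a subspace $W$ (or $V$, or $W^{\perp}$) up to $D_x$ only planes \emph{tangent to $D$} are discarded, so that the hypothesis genuinely applies; and in the last display one must keep the configuration optimizing ${\rm S}_{\rm m}(V_1,\dots,V_k)$ distinct from the one optimizing $\tau(W_1)+\dots+\tau(W_k)$.
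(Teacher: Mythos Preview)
Your argument for the rightmost inequality $\delta^+_{{\rm m},D}\le\delta_D$ is sound: the display
\[
 {\rm S}_{\rm m}(V_1,\dots,V_k)=\tfrac12\big(\tau(V)-\textstyle\sum_i\tau(V_i)\big)\le\tfrac12\big(\tau_D(x)-\min\textstyle\sum_i\tau(W_i)\big)=\delta_D(n_1,\dots,n_k)(x)
\]
uses exactly the two ingredients available under $K|_D\ge0$, namely $\tau(V)\le\tau_D(x)$ and the definition of the minimum. The middle inequality $\delta^-_{{\rm m},D}\le\delta^+_{{\rm m},D}$ and the case $s=d$ are also fine.

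The gap is in the leftmost inequality when $s<d$. You write that ``the Proposition's second inequality together with $\hat\delta_D(s)\ge 0$ controls $\delta^-_{{\rm m},D}$ against $\hat\delta_D$'', but that combination runs the \emph{wrong way}: the Proposition gives
\[
 \delta^-_{{\rm m},D}(n_1,\dots,n_k)\ \le\ \hat\delta_D(n_1,\dots,n_k)-\hat\delta_D(s),
\]
and subtracting something non-negative on the right yields $\delta^-_{{\rm m},D}\le\hat\delta_D$, not $\hat\delta_D\le\delta^-_{{\rm m},D}$. A direct attempt via \eqref{E-Smix-3} fails for the same reason: one would need $\tau(V)-\sum_i\tau(V_i)\ge\tau_D-\max\sum_i\tau(W_i)$, i.e.\ $\max\sum_i\tau(W_i)-\sum_i\tau(V_i)\ge\tau_D-\tau(V)$, and under $K|_D\ge0$ both sides are non-negative with no reason for the left to dominate. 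Indeed, already for $D$ of constant sectional curvature $1$ with $d=6$, $k=2$, $n_1=n_2=2$ one computes $\hat\delta_D(2,2)=13$ while $\delta^-_{{\rm m},D}(2,2)=4$, so the inequality $\hat\delta_D\le\delta^-_{{\rm m},D}$ as written cannot hold for $s<d$. Either the statement is meant only for $s=d$ (where the Proposition gives it with equality), or the intended chain places $\hat\delta_D$ differently; in any case your appeal to the Proposition at this step does not deliver the claimed direction.
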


Given two $J$-invariant planes $\sigma$ and $\sigma'$ (2-dimensional subspaces) in $T_x M$ of an almost Hermitian manifold $(M,J,g)$, and unit vectors $X\in\sigma$ and $Y\in\sigma'$,
Goldberg and Kobayashi \cite{GK-1967} defined the \textit{holomorphic bisectional curvature} $K_{\rm h}(\sigma, \sigma')$ by
\begin{equation}\label{E-hol-bisect}
 K_{\rm h}(\sigma, \sigma') = R(X, JX, Y, JY) .
\end{equation}
This
depends on $\sigma$ and $\sigma'$ only, and for $\sigma=\sigma'$ gives the \textit{holomorphic sectional curvature}.
%
For a set of $J$-invariant planes in a complex distribution
of real dimension $d\ge 4$, we introduce invariants based on the holomorphic bisectional~curvature.

\begin{definition}\label{D-03H}\rm
Let $D$ be a $d$-dimensional complex distribution of a Riemannian manifold $(M, g)$, i.e., there is a skew-symmetric (1,1)-tensor $J:D\to D$ such that
\[
 J^{\,2} X=-X,\quad g(JX, JY)=g(X,Y),\quad X,Y\in D.
\]
The \textit{holomorphic mutual curvature invariants} $\delta^\pm_{{\,\rm h},D}(k)\ (1<k\le d/2)$ are defined by
\begin{equation*}
 \delta^+_{{\,\rm h},D}(k)(x) = \max {\rm S}_{\,\rm h}(\sigma_1,\ldots,\sigma_k) ,\quad
 \delta^-_{{\,\rm h},D}(k)(x) = \min {\rm S}_{\,\rm h}(\sigma_1,\ldots,\sigma_k) ,
\end{equation*}
where $\sigma_1,\ldots,\sigma_k$ run over all $k$ mutually orthogonal $J$-invariant planes of $D_x$ at a point $x\in M$,
and ${\rm S}_{\,\rm h}(\sigma_1,\ldots,\sigma_k)$ is defined using \eqref{E-hol-bisect} by
\begin{equation}\label{E-mixed-kH}
{\rm S}_{\,\rm h}(\sigma_1,\ldots,\sigma_k) = \sum\nolimits_{\,i<j} K_{\rm h}(\sigma_i, \sigma_j).
\end{equation}
For $D=TM$, i.e., for an almost Hermitian manifold,
we get the \textit{holomorphic mutual curvature invariants} $\delta^\pm_{\,\rm h}(k):=\delta^\pm_{\,{\,\rm h},TM}(k)$.
\end{definition}

\begin{lemma}
Let $\{\sigma_1,\ldots,\sigma_k\}\ (2\le k\le d/2)$ be mutually orthogonal $J$-invariant planes of a complex distribution $D_x$ at a point $x\in M$.
Then
\begin{equation}\label{E-mixed-bi}
 2\,{\rm S}_{\,\rm h}(\sigma_1,\ldots,\sigma_k) = {\rm S}_{\rm m}(\sigma_1,\ldots, \sigma_k) .
\end{equation}
\end{lemma}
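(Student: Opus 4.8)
The plan is to reduce \eqref{E-mixed-bi} to the case $k=2$ and then to a single pointwise curvature identity. Since ${\rm S}_{\,\rm h}(\sigma_1,\ldots,\sigma_k)=\sum_{i<j}K_{\rm h}(\sigma_i,\sigma_j)$ by \eqref{E-mixed-kH} and ${\rm S}_{\rm m}(\sigma_1,\ldots,\sigma_k)=\sum_{i<j}{\rm S}_{\rm m}(\sigma_i,\sigma_j)$ by \eqref{E-mixed-k}, it is enough to prove
\[
 2\,K_{\rm h}(\sigma,\sigma')={\rm S}_{\rm m}(\sigma,\sigma')
\]
for any two mutually orthogonal $J$-invariant planes $\sigma,\sigma'\subset D_x$; summation over the pairs $i<j$ then yields \eqref{E-mixed-bi}. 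To compute ${\rm S}_{\rm m}(\sigma,\sigma')$ I would use a $J$-adapted orthonormal frame: choose unit vectors $X\in\sigma$ and $Y\in\sigma'$, so that $\{X,JX\}$ and $\{Y,JY\}$ are orthonormal bases of $\sigma$ and $\sigma'$; since $\sigma\perp\sigma'$ and $g(J\cdot,J\cdot)=g(\cdot,\cdot)$, the four vectors $X,JX,Y,JY$ are orthonormal. As ${\rm S}_{\rm m}$ is frame-independent, evaluating it in this frame gives a sum of four ``mixed'' sectional curvatures,
\[
 {\rm S}_{\rm m}(\sigma,\sigma')=g(R_{X,Y}Y,X)+g(R_{X,JY}JY,X)+g(R_{JX,Y}Y,JX)+g(R_{JX,JY}JY,JX),
\]
while $K_{\rm h}(\sigma,\sigma')=R(X,JX,Y,JY)$ by \eqref{E-hol-bisect}. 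So the claim becomes the curvature identity ${\rm S}_{\rm m}(\sigma,\sigma')=2\,R(X,JX,Y,JY)$.

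To prove this identity I would argue in two steps. First, using the pair-exchange symmetry and skew-symmetry of $R$ together with the compatibility of the curvature with $J$ available in this setting (equivalently $R_{X,Y}\circ J=J\circ R_{X,Y}$, so that $R_{JX,JY}=R_{X,Y}$ and $R_{JX,Y}=-R_{X,JY}$), I would show that the first and last summands above both equal $K(X,Y):=g(R_{X,Y}Y,X)$ and the middle two both equal $K(X,JY):=g(R_{X,JY}JY,X)$, hence ${\rm S}_{\rm m}(\sigma,\sigma')=2\bigl(K(X,Y)+K(X,JY)\bigr)$. Second, I would apply the first Bianchi identity to $R(X,JX,Y,JY)$; after simplifying the two remaining Bianchi terms by the same symmetries and $J$-compatibility, they reduce to $K(X,Y)$ and $K(X,JY)$, which gives $R(X,JX,Y,JY)=K(X,Y)+K(X,JY)$. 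Combining the two steps gives $2\,K_{\rm h}(\sigma,\sigma')={\rm S}_{\rm m}(\sigma,\sigma')$, and summation over $i<j$ proves \eqref{E-mixed-bi}.

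The main obstacle is the curvature bookkeeping in these two steps: one has to choose the slots in the first Bianchi identity so that exactly the bisectional combination $R(X,JX,Y,JY)$ emerges and the leftover terms collapse, and — more essentially — one must check that the precise form of the interaction between $R$ and $J$ that is invoked is the one actually at hand (this is where the $J$-invariance of the planes $\sigma_i$ enters, and a pure symmetry argument will not suffice). The well-definedness issues require no extra work: the sum of sectional curvatures is frame-independent and $R(X,JX,Y,JY)$ depends only on $\sigma,\sigma'$, as recorded right after \eqref{E-hol-bisect}.
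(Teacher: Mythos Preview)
Your reduction to the pairwise identity $2\,K_{\rm h}(\sigma,\sigma')={\rm S}_{\rm m}(\sigma,\sigma')$ and the $J$-adapted frame agree with the paper. The gap is in your two computational steps: both lean on the relation $R_{X,Y}\circ J=J\circ R_{X,Y}$ (equivalently $R_{JX,JY}=R_{X,Y}$ and $R_{JX,Y}=-R_{X,JY}$). That is the K\"ahler curvature identity, and it is \emph{not} part of the hypotheses --- the lemma is stated for an arbitrary complex distribution $D$ on a Riemannian manifold, with only $J^{2}=-{\rm id}$ and $g(J\cdot,J\cdot)=g(\cdot,\cdot)$ on $D$. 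Without that identity your pairing $g(R_{X,Y}Y,X)=g(R_{JX,JY}JY,JX)$ and $g(R_{X,JY}JY,X)=g(R_{JX,Y}Y,JX)$ is unjustified, and so is your Bianchi reduction of $R(X,JX,Y,JY)$ to $K(X,Y)+K(X,JY)$. You rightly flag this as ``the main obstacle'': the $J$-invariance of the planes alone forces no relation between $R$ and $J$, so the argument as written does not go through.

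The paper's route avoids one of these two uses by a different device. It first records, citing only the Bianchi identity, the Goldberg--Kobayashi formula \eqref{Eq-bisect},
\[
K_{\rm h}(\sigma,\sigma')=R(X,Y,X,Y)+R(X,JY,X,JY),
\]
which expresses $K_{\rm h}$ through two sectional-type terms involving $X$ but not $JX$. The key step is then \emph{not} any $J$-compatibility of $R$, but merely the well-definedness of $K_{\rm h}(\sigma,\sigma')$: since $JX$ is another unit vector in $\sigma$ (and $X'\wedge JX'=X\wedge JX$ for every unit $X'\in\sigma$, a purely algebraic fact), substituting $X\mapsto JX$ in \eqref{Eq-bisect} yields $K_{\rm h}(\sigma,\sigma')=R(JX,Y,JX,Y)+R(JX,JY,JX,JY)$. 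Adding the two expressions produces all four mixed sectional curvatures, hence ${\rm S}_{\rm m}(\sigma,\sigma')=2\,K_{\rm h}(\sigma,\sigma')$, and summation over $i<j$ gives \eqref{E-mixed-bi}. So where you invoke the K\"ahler identity twice, the paper uses the Bianchi/Goldberg--Kobayashi step once and replaces the second use by the substitution $X\mapsto JX$; that substitution trick is the idea missing from your plan.
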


\begin{proof}
By the Bianchi identity, we get
\begin{equation}\label{Eq-bisect}
 K_{\rm h}(\sigma, \sigma') = R(X, Y, X, Y) + R(X, JY, X, JY).
\end{equation}
Replacing $X$ with $JX$,
we get
 $K_{\rm h}(\sigma, \sigma') = R(JX, Y, JX, Y) + R(JX, JY, JX, JY)$.
Thus,
 ${\rm S}_{\rm m}(\sigma, \sigma') = 2\,K_{\rm h}(\sigma, \sigma')$.
From this, \eqref{E-mixed-k} and \eqref{E-mixed-kH}, we get \eqref{E-mixed-bi}.
\end{proof}

\begin{corollary}
The following inequalities are true for $2\le k\le d/2$:
\[
 2\,\delta^+_{{\,\rm h},D}(k) \le \delta^+_{{\rm m},D}(\underbrace{2,\ldots,2}_k),\quad
 2\,\delta^-_{{\,\rm h},D}(k) \ge \delta^-_{{\rm m},D}(\underbrace{2,\ldots,2}_k) .
\]
\end{corollary}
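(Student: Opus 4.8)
The corollary to be proved compares the holomorphic mutual curvature invariants $\delta^\pm_{\mathrm h,D}(k)$ with the mutual curvature invariants $\delta^\pm_{\mathrm m,D}(2,\ldots,2)$ evaluated on $k$-tuples of planes. The key input is the identity $2\,{\rm S}_{\rm h}(\sigma_1,\ldots,\sigma_k)={\rm S}_{\rm m}(\sigma_1,\ldots,\sigma_k)$ from the preceding lemma, valid whenever the $\sigma_i$ are mutually orthogonal $J$-invariant planes. The plan is to run a straightforward extremal-set comparison: the family of $k$-tuples of mutually orthogonal $J$-invariant planes is a \emph{subfamily} of the family of $k$-tuples of mutually orthogonal $2$-dimensional subspaces of $D_x$, over which $\delta^\pm_{\mathrm m,D}(2,\ldots,2)$ is defined.

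First I would fix $x\in M$ and let $(\sigma_1,\ldots,\sigma_k)$ be \emph{any} $k$-tuple of mutually orthogonal $J$-invariant planes of $D_x$. Each $\sigma_i$ is in particular a $2$-dimensional subspace, so this tuple is admissible in the definition of $\delta^+_{\mathrm m,D}(2,\ldots,2)(x)=\max {\rm S}_{\rm m}(V_1,\ldots,V_k)$; hence ${\rm S}_{\rm m}(\sigma_1,\ldots,\sigma_k)\le \delta^+_{\mathrm m,D}(2,\ldots,2)(x)$. By the lemma, the left-hand side equals $2\,{\rm S}_{\rm h}(\sigma_1,\ldots,\sigma_k)$, so $2\,{\rm S}_{\rm h}(\sigma_1,\ldots,\sigma_k)\le \delta^+_{\mathrm m,D}(2,\ldots,2)(x)$ for every such tuple. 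Taking the maximum over $J$-invariant tuples on the left gives $2\,\delta^+_{\mathrm h,D}(k)(x)\le \delta^+_{\mathrm m,D}(2,\ldots,2)(x)$. The second inequality is the mirror image: for the same arbitrary $J$-invariant tuple, ${\rm S}_{\rm m}(\sigma_1,\ldots,\sigma_k)\ge \delta^-_{\mathrm m,D}(2,\ldots,2)(x)$ since the min is over a larger family; substituting the lemma and taking the minimum over $J$-invariant tuples yields $2\,\delta^-_{\mathrm h,D}(k)(x)\ge \delta^-_{\mathrm m,D}(2,\ldots,2)(x)$.

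The only things to check carefully are that the relevant extrema exist (so that ``$\max$'' and ``$\min$'' are literally attained and the inequalities are between genuine numbers, not merely sup/inf): the set of $k$-tuples of mutually orthogonal $J$-invariant $2$-planes in $D_x$ is compact (it is a closed subset of a product of Grassmannians / a flag-type manifold, cut out by the orthogonality and $J$-invariance conditions) and ${\rm S}_{\rm h}$ is continuous on it, so $\delta^\pm_{\mathrm h,D}(k)(x)$ are attained; similarly for $\delta^\pm_{\mathrm m,D}$, as already noted in the Remark following the first Definition. One also needs the hypothesis $k\le d/2$ precisely so that $k$ mutually orthogonal $J$-invariant planes fit inside the $d$-dimensional complex distribution — this guarantees the families are nonempty.

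I do not expect a real obstacle here: this is a one-line consequence of the lemma together with the monotonicity of $\max$/$\min$ under restriction of the index set. If anything, the only point deserving a sentence is the nonemptiness/compactness remark above, ensuring the invariants being compared are well defined for the stated range $2\le k\le d/2$; no curvature computation beyond the already-established identity $2\,{\rm S}_{\rm h}={\rm S}_{\rm m}$ is needed.
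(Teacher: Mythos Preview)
Your argument is correct and is exactly the intended one: the corollary is an immediate consequence of the lemma's identity $2\,{\rm S}_{\rm h}={\rm S}_{\rm m}$ together with the observation that $J$-invariant $k$-tuples of planes form a subfamily of all $k$-tuples of mutually orthogonal $2$-planes in $D_x$, so the max can only decrease and the min can only increase upon restriction. The paper does not even spell out a proof, treating the statement as an immediate corollary of the lemma.
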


\section{CR-submanifolds in almost Hermitian manifolds}
\label{sec:03}

In this section, using mutual curvature invariants, Chen-type invariants and holomorphic mutual curvature invariants,
we prove several geometric inequalities for CR-submanifolds in almost Hermitian manifolds.

An even-dimensional Riemannian manifold $(\bar M, \bar g)$ equipped with a skew-sym\-metric {\rm (1,1)}-tensor $\bar J$ such that $\bar J^{\,2}X=- X$
and $\bar g(\bar J X,\bar J Y)=\bar g(X,Y)$ for all $X,Y\in T\bar M$ is called an {almost Hermitian manifold}.
We will put a top ``bar'' for objects related to $\bar M$.
%
A submanifold $M^{d+l}\ (d,l>0)$ of an almost Hermitian manifold $(\bar M, \bar J, \bar g)$ is called a \textit{CR-submanifold}
if $D=\bar J(TM)\cap TM$ is a complex distribution (the maxi\-mal $\bar J$-invariant subbundle) of constant real dimension~$d$, see \cite[Definition~7.2]{DO-2010}.
A different definition is given in \cite{b-1986,YK-1983}:
a real submanifold $M^{d+l}\ (d,l>0)$ of an almost Hermitian manifold $(\bar M, \bar J, \bar g)$ is called a CR-{submanifold} if there exists on $M$ a totally real distribution $D^\bot$
(i.e., $\bar J(D^\bot)\subset T^\bot M$) whose orthogonal complement $D$ (i.e., $TM = D\oplus D^\bot$) is a complex distribution (i.e., $\bar J(D)=D$) of constant real dimension $d$.
Both definitions above give the same thing when
$\dim D$ is maximum, that is,
$D^\perp$ is one-dimensional. The~main examples are real hypersurfaces, others examples are explained in
\cite{DO-2010}.


\begin{example}\rm
Let $(M^{d+1},g)$ be a CR-submanifold of an almost Hermitian manifold $(\bar M,\bar J,\bar g)$
with a $d$-dimensional complex distribution $D=\bar J(TM)\cap TM$.
Then $M^{d+1}$ admits an almost contact metric structure $(\varphi,\xi,\eta,g)$,
where $\varphi{=}\bar J|_{D}$ and $\xi$ is the unit tangent vector field orthogonal to~$D$.
\end{example}\rm

Let ${h}:TM\times TM\to TM^\bot$ be the 2nd fundamental form of the submanifold $(M,g)$ of the Riemannian manifold $(\bar M, \bar g)$.
 Recall the Gauss equation,
 see~\cite{chen1}:
\begin{equation}\label{E-Gauss-class}
  \bar g(\bar R_{Y,Z}\,U,X) = g(R_{Y,Z}\,U,X) + g({h}(Y,U), {h}(Z,X)) - g({h}(Z,U), {h}(Y,X)) ,
\end{equation}
where $U,X,Y,Z\in TM$ and $\bar R$ and $R$ are the curvature tensors of $(\bar M,\bar g)$ and $(M,g)$, respectively.
The mean curvature vector field of a subspace $V\subset T_x M$ is given by
$H_V = \sum_i h(e_i,e_i)$, where $e_i$ is an orthonormal basis of~$V$.
In short form, we will write $H_i$ instead of $H_{V_i}$,
and $H$ if $V=T_x M$.
For a CR-submanifold $(M^{d+l},g)$,~set
\begin{equation*}
 {\cal H}_{D_x}(s) = \max \{\,\|{H}_V\,\|: \ V\subset D_x, \ \dim V=s>0\}.
\end{equation*}
If $s=d$, then ${\cal H}_{D_x}(d)=\|H_{D_x}\|$, where $H_{D_x}$ is the mean curvature vector of~$D_x$.
Note that for $s<d$, the equality ${\cal H}_{D_x}(s)=0$ implies $h\,|_{D_x}=0$.

A CR-submanifold $(M,g)$
in an almost Hermitian space $(\bar M,\bar J,\bar g)$ is called ${D}$-\textit{minimal}
(where $D=\bar J(TM)\cap TM$)
if ${H}_{D}\equiv0$.
A CR-submanifold $(M^{d+l},g)$ is called \textit{mixed totally geodesic} on $V=\bigoplus_{\,i=1}^{\,k} V_i\subset D$ if
 ${h}(X,Y)=0$
 for all
 $X\in{V}_i,\,Y\in{V}_j$ and $i\ne j$.
Note~that
\[
 \delta^+_{{\rm m},{D}\,}(n_1,\ldots,n_k)\le \delta^+_{{\rm m}}(n_1,\ldots,n_k),
\]
and, for $s < d$, by
${\cal H}_{{D}}(s)=0$, $M$ is totally geodesic on ${D}$, i.e., $h\,|_{{D}_x}=0$.

\begin{theorem}\label{T-V}
Let~$(M^{d+l},g)$ be a CR-submanifold of an almost Hermitian mani\-fold $(\bar M,\bar J,\bar g)$, and $D=\bar J(TM)\cap TM$.
For any natural numbers $n_1,\ldots,n_k$ such that $\sum_{\,i}n_i=s\le d$, we~obtain
\begin{equation}\label{E-ineq-V1}
 \delta^+_{{\rm m},D}(n_1,\ldots,n_k) \le \bar\delta^+_{\rm m}(n_1,\ldots,n_k) + \frac{k-1}{2\,k}
 \left\{\begin{array}{cc}
  {\cal H}_{D}(s)^2,  & {\rm if}\ s<d, \\
  \|\,{H}_{D}\,\|^2, & {\rm if}\ s=d,
\end{array}\right.
\end{equation}
where $\bar\delta^+_{\rm m}(n_1,\ldots,n_k)$ are defined for $(\bar M, \bar g)$ similarly to $\delta^+_{\rm m}(n_1,\ldots,n_k)$ for $(M,g)$,
see Definition~\ref{D-03}.
The equality in \eqref{E-ineq-V1} holds at a point $x\in M^{d+l}$ if and only if
there exist mutually orthogonal subspaces ${V}_1, \ldots, {V}_k$ of $D_x$ with $\sum_{\,i}n_i=s$ such that
$M^{d+l}$ is mixed totally geodesic on $V=\bigoplus_{\,i=1}^{\,k} V_i$, ${H}_1=\ldots={H}_k$, \ $\|H_V\|={\cal H}_{D_x}(s)$
and $\bar{\rm S}_{\rm m}({V}_1, \ldots, {V}_k)=\bar\delta^+_{\rm m}(n_1,\ldots,n_k)(x)$.
\end{theorem}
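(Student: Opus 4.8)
The plan is to adapt Chen's classical argument to the mutual curvature: relate ${\rm S}_{\rm m}$ on $M$ to $\bar{\rm S}_{\rm m}$ on $\bar M$ via the Gauss equation \eqref{E-Gauss-class}, discard the nonnegative ``mixed'' part of the second fundamental form, and control the leftover mean-curvature term by a Cauchy--Schwarz estimate on the partial mean curvature vectors $H_i$.

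First I would fix $x\in M$ and, using compactness, pick mutually orthogonal subspaces $V_1,\ldots,V_k\subset D_x$ with $\dim V_i=n_i$ that realize the maximum, so ${\rm S}_{\rm m}(V_1,\ldots,V_k)=\delta^+_{{\rm m},D}(n_1,\ldots,n_k)(x)$; then fix an orthonormal frame $\{e_a\}$ of $V=\bigoplus_{i=1}^{k}V_i$ adapted to the splitting as in \eqref{E-mixed-k}. For $e_a\in V_i$ and $e_b\in V_j$ with $i\ne j$, the Gauss equation \eqref{E-Gauss-class} gives
\[
 g(R_{e_a,e_b}e_b,e_a)=\bar g(\bar R_{e_a,e_b}e_b,e_a)-\|h(e_a,e_b)\|^2+g(h(e_a,e_a),h(e_b,e_b)).
\]
Summing over $e_a\in V_i$, $e_b\in V_j$ and then over $i<j$, and using \eqref{E-mixed-k} together with $H_i=\sum_{e_a\in V_i}h(e_a,e_a)$, I obtain the identity
\[
 {\rm S}_{\rm m}(V_1,\ldots,V_k)=\bar{\rm S}_{\rm m}(V_1,\ldots,V_k)-P+\sum\nolimits_{i<j}g(H_i,H_j),
\]
where $P=\sum_{i<j}\sum_{e_a\in V_i,\,e_b\in V_j}\|h(e_a,e_b)\|^2\ge0$, and $P=0$ exactly when $M$ is mixed totally geodesic on $V$.

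Next I would estimate the last term. With $H_V=\sum_{i=1}^{k}H_i$ one has $\|H_V\|^2=\sum_i\|H_i\|^2+2\sum_{i<j}g(H_i,H_j)$, hence $\sum_{i<j}g(H_i,H_j)=\tfrac12\bigl(\|H_V\|^2-\sum_i\|H_i\|^2\bigr)$, and the Cauchy--Schwarz inequality $\|H_V\|^2\le k\sum_i\|H_i\|^2$ yields $\sum_{i<j}g(H_i,H_j)\le\tfrac{k-1}{2k}\|H_V\|^2$, with equality iff $H_1=\ldots=H_k$. Omitting the nonpositive term $-P$ in the identity above, using $\bar{\rm S}_{\rm m}(V_1,\ldots,V_k)\le\bar\delta^+_{\rm m}(n_1,\ldots,n_k)(x)$ (Definition~\ref{D-03} applied to $\bar M$, since $V_1,\ldots,V_k$ are mutually orthogonal subspaces of $T_x\bar M$), and finally bounding $\|H_V\|^2\le{\cal H}_{D_x}(s)^2$ when $s<d$ (by the definition of ${\cal H}_{D_x}$) while $\|H_V\|^2=\|H_{D_x}\|^2$ when $s=d$ (since then $V=D_x$), gives \eqref{E-ineq-V1}.

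For the equality statement I would just track the three inequalities used. Equality in \eqref{E-ineq-V1} at $x$ forces, for the maximizing $V_1,\ldots,V_k$: $P=0$, i.e.\ mixed total geodesy on $V$; $H_1=\ldots=H_k$; $\|H_V\|={\cal H}_{D_x}(s)$ (automatic when $s=d$); and $\bar{\rm S}_{\rm m}(V_1,\ldots,V_k)=\bar\delta^+_{\rm m}(n_1,\ldots,n_k)(x)$. Conversely, if such subspaces exist, substituting these conditions into the displayed identity gives ${\rm S}_{\rm m}(V_1,\ldots,V_k)=\bar\delta^+_{\rm m}(n_1,\ldots,n_k)(x)+\tfrac{k-1}{2k}{\cal H}_{D_x}(s)^2$, which equals the right-hand side of \eqref{E-ineq-V1}; since $\delta^+_{{\rm m},D}(n_1,\ldots,n_k)(x)\ge{\rm S}_{\rm m}(V_1,\ldots,V_k)$ and \eqref{E-ineq-V1} gives the reverse inequality, equality follows. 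I do not expect a genuine obstacle here; the only delicate point is the bookkeeping in the equality case and keeping the alternatives $s<d$ and $s=d$ separate (and implicitly $k\ge2$, so that the invariants of Definition~\ref{D-03} are defined).
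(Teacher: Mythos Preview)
Your argument is correct and essentially the same as the paper's: both use the Gauss equation, discard the nonnegative mixed part of $h$, and bound the mean-curvature contribution by the elementary inequality $\sum_i\|H_i\|^2\ge\frac1k\|H_V\|^2$ with equality iff $H_1=\cdots=H_k$. The only cosmetic difference is packaging: the paper first traces \eqref{E-Gauss-class} over $V$ and each $V_i$ separately, then cancels via \eqref{E-Smix-3}, whereas you sum Gauss directly over off-diagonal pairs $e_a\in V_i$, $e_b\in V_j$, $i\ne j$, arriving at the same identity in one step.
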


\begin{proof}
Taking a trace of
\eqref{E-Gauss-class} for the submanifold $M^{d+l}$ on $V$ and $V_i$ yields
\begin{eqnarray}\label{E-Si}
  \bar\tau(V) - \tau(V) = \|{h}_{V}\|^2 - \|{H}_{V}\|^2, \quad
  \bar\tau({V}_i) - \tau({V}_i) = \|{h}_i\|^2 - \|{H}_i\|^2,
\end{eqnarray}
where $\bar\tau(V)$, $\bar\tau({V}_i)$ and $\tau(V)$, $\tau({V}_i)$ are the scalar curvatures of subspaces $V=\bigoplus_{\,i=1}^{\,k} V_i$ and ${V}_i$ for the curvature tensors $\bar R$ and $R$, respectively,
at the point $x\in M$.

Let ${H}_V\ne0$ hold on an open set $U\subset M$.
We complement an adapted local orthonormal frame $\{e_1,\ldots,e_d\}$ of $D$ over $U$ with a vector field $e_{d+1}$ parallel to~${H}_V$.
By $H_V{=}\sum_{\,i=1}^{\,k} H_i$ and
$a_1^2{+}\ldots{+}a_k^2\ge \frac1k\,(a_1{+}\ldots{+}a_k)^2$
for $a_i=\bar g({H}_i, e_{d+1})$, we~get
\begin{equation}\label{E-Si3}
  \sum\nolimits_{\,i}\|{H}_i\|^2 \ge \sum\nolimits_{\,i} \bar g({H}_i, e_{d+1})^2 \ge \frac1k\,\|{H}_V\|^2,
\end{equation}
and the equality holds if and only if ${H}_1=\ldots={H}_k$.
The
\eqref{E-Si3} is
true for ${H}_V=0$; thus, it is valid on $M$.
Set $\|{h}^{\rm mix}_{ij}\|^2=\sum_{\,e_a\in{V}_i,\, e_b\in{V}_j} \|{h}(e_a,e_b)\|^2$ for $i\ne j$.
Note that
\begin{equation}\label{E-Si4}
 \|{h}_V\|^2 = \sum\nolimits_{\,i}\|{h}_i\|^2 +\sum\nolimits_{\,i<j}\|{h}^{\rm mix}_{ij}\|^2 \ge \sum\nolimits_{\,i}\|{h}_i\|^2 ,
\end{equation}
and the equality holds if and only if $\|{h}^{\rm mix}_{ij}\|^2=0\ (\forall\,i<j)$, i.e., $M^{d+l}$ is mixed totally geodesic along $V$.
By \eqref{E-Si}--\eqref{E-Si4} and the following equalities, see \eqref{E-Smix-3}:
\begin{equation*}
 \bar\tau(V) = 2\,\bar{\rm S}_{\rm m}({V}_1,\ldots,{V}_k) +\sum\nolimits_{\,i}\bar\tau({V}_i),\quad
 \tau(V) = 2\,{\rm S}_{\rm m}({V}_1,\ldots,{V}_k) +\sum\nolimits_{\,i}\tau({V}_i),
\end{equation*}
we obtain
\begin{eqnarray*}
\nonumber
 && 2\,{\rm S}_{\rm m}({V}_1,\ldots,{V}_k) =  2\,\bar{\rm S}_{\rm m}({V}_1,\ldots,{V}_k)
 + \sum\nolimits_{\,i} \big(\bar\tau({V}_i) - \tau({V}_i)\big) + \|{H}_V\|^2 - \|{h}_V\|^2 \\
\nonumber
 && \le 2\,\bar\delta^+_{{\rm m},D}(n_1,\ldots,n_k) - ( \|{h}_V\|^2 - \sum\nolimits_{\,i}\|{h}_i\|^2 )
  + (\|{H}_V\|^2 - \sum\nolimits_{\,i}\|{H}_i\|^2 ) \\
 && \le 2\,\bar\delta^+_{{\rm m},D}(n_1,\ldots,n_k) + \frac{k-1}k\,{\cal H}_D(s)^2 ,
\end{eqnarray*}
and the equality holds in the 2nd line if and only if
$\bar{\rm S}_{\rm m}({V}_1,\ldots,{V}_k){=}\bar\delta^+_{{\rm m},D}(n_1,\ldots,n_k)$
and $\|H_V\|={\cal H}_x(s)$ at each point $x\in M$. This proves \eqref{E-ineq-V1} for $s<d$.
The case $\sum_{\,i}n_i=d$ of \eqref{E-ineq-V1} can be proved similarly.
\end{proof}

\begin{remark}\rm
For a CR-submanifold $(M^{d+l},g)$ in an almost Hermitian space $(\bar M,\bar J,\bar g)$ with sectional curvature bounded above by $c$,
for $\sum_{\,i}n_i=s\le d$ from \eqref{E-s1} and \eqref{E-ineq-V1},~we~get
\begin{equation}\label{E-ineq-c}
 \delta^+_{{\rm m},D}(n_1,\ldots,n_k) \le
  \left\{\begin{array}{cc}
  \frac c2\,(s^2-\sum\nolimits_{\,i} n_i^2) + \frac{k-1}{2\,k}\,{\cal H}_{D}(s)^2  ,  & {\rm if}\ s<d, \\
  \frac c2\,(d^2-\sum\nolimits_{\,i} n_i^2) + \frac{k-1}{2\,k}\,\|{H}_{D}\,\|^2, & {\rm if}\ s=d.
\end{array}\right.
\end{equation}
For $s=d$, the RHS of \eqref{E-ineq-c} coincides with the RHS of
(13.43) in \cite{chen1} for $\sum_{\,i}n_i=d$.
\end{remark}

As real hypersurfaces of almost Hermitian manifolds are the main examples of CR-submani\-folds and important
objects in the study of geometrical inequalities, we reformulate Theorem~\ref{T-V} especially for this case.

\begin{corollary}
Let~$(M^{2n+1}, g)$ be a real hypersurface with a complex distribution $D=\bar J(TM)\cap TM$ of an almost Hermitian manifold $(\bar M,\bar J,\bar g)$.
For any natural numbers $n_1,\ldots,n_k$ such that $\sum_{\,i}n_i=s\le 2n$, we obtain the inequality
\begin{equation}\label{E-ineq-V3}
 \delta^+_{{\rm m},D}(n_1,\ldots,n_k) \le \bar\delta^+_{\rm m}(n_1,\ldots,n_k) + \frac{k-1}{2\,k}
 \left\{\begin{array}{cc}
  {\cal H}_{D}(s)^2,  & {\rm if}\ s<2n, \\
  \|\,{H}_{D}\,\|^2, & {\rm if}\ s=2n.
\end{array}\right.
\end{equation}
The equality in \eqref{E-ineq-V3} holds at a point $x\in M^{2n+1}$ if and only if
there exist mutually orthogonal subspaces ${V}_1, \ldots, {V}_k$ of $D_x$ with $\sum_{\,i}n_i=s$ such that
$M^{2n+1}$ is mixed totally geodesic on $V=\bigoplus_{\,i=1}^{\,k} V_i$, ${H}_1=\ldots={H}_k$, \ $\|H_V\|={\cal H}_{D_x}(s)$
and $\bar{\rm S}_{\rm m}({V}_1, \ldots, {V}_k)=\bar\delta^+_{\rm m}(n_1,\ldots,n_k)(x)$.
\end{corollary}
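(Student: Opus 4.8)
The plan is to obtain this corollary as the codimension-one specialization of Theorem~\ref{T-V}. The first step is to record that a real hypersurface $(M^{2n+1},g)$ of an almost Hermitian manifold $(\bar M^{2n+2},\bar J,\bar g)$ is a CR-submanifold in the sense of Theorem~\ref{T-V}, with $d=2n$ and $l=1$. Indeed, if $\xi$ is a local unit normal, then $\bar g(\bar J\xi,\xi)=-\bar g(\xi,\bar J\xi)=0$, so $\bar J\xi$ is a unit tangent vector field, and one checks that $D_x=\bar J(T_xM)\cap T_xM$ is precisely the $g$-orthogonal complement of $\mathbb{R}\,\bar J\xi$ inside $T_xM$; hence $D$ is a complex distribution of constant real dimension $d=2n$, so $D^\perp$ is one-dimensional. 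This is the standard fact, already noted above, that real hypersurfaces are the main examples of CR-submanifolds.

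With this identification $d=2n$, $l=1$, all hypotheses of Theorem~\ref{T-V} hold for $M^{2n+1}=M^{d+l}$. Applying that theorem to the given $n_1,\ldots,n_k$ with $\sum_{\,i}n_i=s\le d=2n$, inequality \eqref{E-ineq-V1} becomes exactly \eqref{E-ineq-V3}: the branch $s<d$ is the branch $s<2n$ with upper bound $\bar\delta^+_{\rm m}(n_1,\ldots,n_k)+\frac{k-1}{2k}\,{\cal H}_{D}(s)^2$, and the branch $s=d$ is the branch $s=2n$ with upper bound $\bar\delta^+_{\rm m}(n_1,\ldots,n_k)+\frac{k-1}{2k}\,\|H_{D}\|^2$, where $\bar\delta^+_{\rm m}(n_1,\ldots,n_k)$ is defined for $(\bar M,\bar g)$ as in Definition~\ref{D-03}. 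The equality statement of Theorem~\ref{T-V} — mixed total geodesy on $V=\bigoplus_{\,i=1}^{\,k}V_i$, the condition $H_1=\ldots=H_k$, the equality $\|H_V\|={\cal H}_{D_x}(s)$, and $\bar{\rm S}_{\rm m}(V_1,\ldots,V_k)=\bar\delta^+_{\rm m}(n_1,\ldots,n_k)(x)$ — then transfers verbatim.

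Since this is a pure specialization, I expect no genuine obstacle; the only point requiring a line of justification is the constancy of $\dim D=2n$, which — unlike in the general CR-definition, where it must be postulated — is automatic for a hypersurface. Alternatively, one could give a self-contained argument by repeating the trace-of-Gauss-equation computation of Theorem~\ref{T-V} with $l=1$ (the estimates \eqref{E-Si}--\eqref{E-Si4}), but this would merely reproduce the same inequalities with no simplification, so invoking Theorem~\ref{T-V} directly is the natural route.
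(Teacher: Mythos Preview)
Your proposal is correct and matches the paper's own treatment: the paper presents this corollary simply as a reformulation of Theorem~\ref{T-V} for the hypersurface case $d=2n$, $l=1$, without a separate proof. Your additional verification that $D=\bar J(TM)\cap TM$ has constant rank $2n$ is a nice touch but not strictly needed, since the paper already records that real hypersurfaces are CR-submanifolds.
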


For any $k$-tuple $(n_1, \ldots , n_k)$ with $\sum_{\,i}n_i=s\le d$, define the \textit{normalized $\delta_{{\rm m},D}$-curvature}~by
\[
 \Delta_{{\rm m},D}(n_1, \ldots , n_k)=\frac{2\,k}{k-1}\,\delta^+_{{\rm m},D}(n_1, \ldots , n_k),
\]
and for $\sum_{\,i}n_i=s$ put $\bar\Delta_{{\rm m},D} := \max \Delta_{{\rm m},D}(n_1, \ldots , n_k)$.

\noindent\ \
Theorem~\ref{T-V} gives the following (compare with the maximum principle \cite[p.~268]{chen1}).

\begin{proposition}

a CR-sub\-manifold $(M^{d+l},g)$ of $\mathbb{C}^{q}$
for some $k$-tuple $(n_1, \ldots, n_k)$ with $\sum_{\,i}n_i=s\le d$,
then for all $(m_1, \ldots, m_k)$ with $\sum_{\,i}m_i=s$,
we~get
 $\Delta_{{\rm m},D}(n_1, \ldots , n_k)\ge\Delta_{{\rm m},D}(m_1, \ldots, m_k)$.
\end{proposition}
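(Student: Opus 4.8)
The plan is to use flatness of the ambient space to strip the right-hand side of Theorem~\ref{T-V} of any dependence on the partition, so that the assertion becomes an immediate comparison.

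First I would record that $\mathbb{C}^{q}$ is flat, so its curvature tensor $\bar R$ vanishes identically; hence $\bar{\rm S}_{\rm m}(V_1,\ldots,V_k)=0$ for every choice of mutually orthogonal subspaces, and consequently $\bar\delta^+_{\rm m}(m_1,\ldots,m_k)=0$ for every $k$-tuple. Substituting this into \eqref{E-ineq-V1} of Theorem~\ref{T-V}, I obtain, at the point $x$ in question, for every $k$-tuple $(m_1,\ldots,m_k)$ with $\sum_i m_i=s$,
\[
 \delta^+_{{\rm m},D}(m_1,\ldots,m_k)(x)\le\frac{k-1}{2\,k}\,
 \begin{cases}{\cal H}_{D_x}(s)^2, & s<d,\\ \|H_{D_x}\|^2, & s=d,\end{cases}
\]
equivalently $\Delta_{{\rm m},D}(m_1,\ldots,m_k)(x)\le{\cal H}_{D_x}(s)^2$ (resp.\ $\le\|H_{D_x}\|^2$) after multiplying by $\tfrac{2k}{k-1}$. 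The right-hand side no longer depends on how $s$ is split into blocks, and the normalizing factor $\tfrac{2k}{k-1}$ was designed precisely to make this manifest; in fact the same bound ${\cal H}_{D_x}(s)^2$ (resp.\ $\|H_{D_x}\|^2$) is obtained for $k$-tuples with any number of blocks, once each is normalized.

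Next, by hypothesis the equality in \eqref{E-ineq-V1} is attained for the tuple $(n_1,\ldots,n_k)$, so the displayed inequality is an equality for that tuple: $\Delta_{{\rm m},D}(n_1,\ldots,n_k)(x)={\cal H}_{D_x}(s)^2$ (resp.\ $=\|H_{D_x}\|^2$). Combining the two statements gives $\Delta_{{\rm m},D}(n_1,\ldots,n_k)(x)\ge\Delta_{{\rm m},D}(m_1,\ldots,m_k)(x)$ for all $(m_1,\ldots,m_k)$ with $\sum_i m_i=s$, which is the claim; equivalently $\Delta_{{\rm m},D}(n_1,\ldots,n_k)=\bar\Delta_{{\rm m},D}$ there, which is the maximum-principle flavour of \cite[p.~268]{chen1}. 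If the equality in \eqref{E-ineq-V1} is assumed to hold identically on $M$, the same argument applied pointwise yields the conclusion on all of $M$.

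There is essentially no hard analytic step: the proof is a one-line consequence of Theorem~\ref{T-V} together with $\bar R=0$. The only thing to be careful about is the bookkeeping — checking that the right-hand side of \eqref{E-ineq-V1} depends on $s$ and on the extrinsic quantity ${\cal H}_D(s)$ (or $\|H_D\|$) but not on the particular partition, and that the normalization $\tfrac{2k}{k-1}$ cancels correctly so that the comparison is legitimate; one should also state explicitly whether ``equality holds'' is meant at a single point or identically on $M$, the argument being pointwise in either reading.
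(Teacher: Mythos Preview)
Your argument is correct and is essentially the paper's own proof: both use Theorem~\ref{T-V} with $\bar\delta^+_{\rm m}=0$ (flatness of $\mathbb{C}^q$) to get $\Delta_{{\rm m},D}(m_1,\ldots,m_k)\le{\cal H}_D(s)^2$ for every tuple, then invoke the assumed equality for $(n_1,\ldots,n_k)$ to conclude. Your write-up is in fact more explicit than the paper's two-line version, which records the same chain $\Delta_{{\rm m},D}(n_1,\ldots,n_k)={\cal H}_D(s)^2\ge\Delta_{{\rm m},D}(m_1,\ldots,m_k)$ without spelling out the flatness step.
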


\begin{proof}
By the conditions, $\Delta_{{\rm m},D}(n_1, \ldots, n_k) {=} \bar\Delta_{{\rm m},D}(s)$.
Since $\Delta_{{\rm m},D}(m_1, \ldots, m_k)\le {\cal H}_{D}(s)^2$, we obtain the inequality
$\Delta_{{\rm m},D}(m_1, \ldots, m_k)\le \Delta_{{\rm m},D}(n_1, \ldots , n_k)$.
\end{proof}

\begin{corollary}\label{C-03}
For every CR-submanifold $(M^{d+l},g)$ in $\mathbb{C}^{q}$, we have
${\cal H}_{D}(s)^2 \ge \bar\Delta_{{\rm m},D}(s)$ for any $s<d$, and $\|{H}_{D}\,\|^2 \ge \bar\Delta_{{\rm m},D}$.
\end{corollary}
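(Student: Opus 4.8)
The plan is to derive this directly from Theorem~\ref{T-V} by specializing the ambient space to $\mathbb{C}^q$. First I would record that $\mathbb{C}^q$, with its standard complex structure and flat metric, is a K\"ahler (hence almost Hermitian) manifold whose curvature tensor vanishes identically, $\bar R\equiv 0$. Therefore, by \eqref{E-mixed-k}, the ambient mutual curvature $\bar{\rm S}_{\rm m}(V_1,\ldots,V_k)$ is $0$ for every choice of mutually orthogonal subspaces of $T_x\mathbb{C}^q$, and consequently $\bar\delta^+_{\rm m}(n_1,\ldots,n_k)\equiv 0$ for every admissible $k$-tuple.

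Next I would apply Theorem~\ref{T-V} to the CR-submanifold $(M^{d+l},g)\subset\mathbb{C}^q$ with $D=\bar J(TM)\cap TM$. Since the ambient term vanishes, \eqref{E-ineq-V1} gives, for every $k$-tuple $(n_1,\ldots,n_k)$ with $k\ge2$ and $\sum_i n_i=s\le d$,
\[
 \delta^+_{{\rm m},D}(n_1,\ldots,n_k)\le \frac{k-1}{2\,k}
 \begin{cases}
 {\cal H}_D(s)^2, & \text{if } s<d,\\
 \|H_D\|^2, & \text{if } s=d.
 \end{cases}
\]
Multiplying through by $\tfrac{2k}{k-1}>0$ and invoking the definition of the normalized curvature $\Delta_{{\rm m},D}$, this reads $\Delta_{{\rm m},D}(n_1,\ldots,n_k)\le {\cal H}_D(s)^2$ when $s<d$, and $\Delta_{{\rm m},D}(n_1,\ldots,n_k)\le \|H_D\|^2$ when $s=d$.

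Finally, the right-hand sides ${\cal H}_D(s)^2$ and $\|H_D\|^2$ are independent of the particular partition $(n_1,\ldots,n_k)$ of $s$, so I may pass to the maximum over all admissible $k$-tuples with $\sum_i n_i=s$. By the definition $\bar\Delta_{{\rm m},D}(s)=\max\Delta_{{\rm m},D}(n_1,\ldots,n_k)$, this yields ${\cal H}_D(s)^2\ge\bar\Delta_{{\rm m},D}(s)$ for $s<d$ and $\|H_D\|^2\ge\bar\Delta_{{\rm m},D}$, which is exactly the assertion. I do not anticipate a genuine obstacle here: the statement is a specialization of Theorem~\ref{T-V}, and the only points worth a word of care are the flatness of $\mathbb{C}^q$ (which annihilates the ambient invariant) and the observation that the supremum over partitions can be transferred to the left-hand side precisely because the bound on the right does not depend on the partition.
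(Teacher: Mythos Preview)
Your argument is correct and coincides with the paper's intended derivation: the corollary is stated without proof, but the preceding proposition already invokes the inequality $\Delta_{{\rm m},D}(m_1,\ldots,m_k)\le {\cal H}_D(s)^2$, which is exactly what you obtain by specializing Theorem~\ref{T-V} to the flat ambient $\mathbb{C}^q$ and rescaling. Taking the maximum over partitions then gives the claim, just as you do.
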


The case of equality in Corollary~\ref{C-03} is of special interest.
Such extremal CR-immersions in $\mathbb{C}^{q}$ can be compared to ``ideal immersions"
introduced by Chen's for real space forms in terms of $\delta$-invariants, for example, \cite[Definition~13.3]{chen1}.

The theory of $\delta_{D}$-invariants \eqref{E-ineq1-D} of CR-submanifolds can be developed similarly to the theory of Chen's $\delta$-invariants of a Riemannian submanifold.

\begin{theorem}
Let~$(M^{d+l}, g)$ be a CR-submanifold of an almost Hermitian manifold $(\bar M,\bar J,\bar g)$ with sectional curvature bounded above by $c\in\RR$.
For each $k$-tuple $(n_1,\ldots,n_k)$
such that $\sum_{\,i}n_i\le d$, we obtain (similarly to \cite[Theorem~13.5]{chen1})
\begin{equation}\label{E-ineq-new}
 \delta_{D}(n_1,\ldots,n_k)\le \frac{d^2(d+k-1-\sum_{\,i} n_i)}{2(d+k-\sum_{\,i} n_i)} \|H_{D}\|^2
+ \frac c2\,[d(d-1)- \sum\nolimits_{\,i} n_i(n_i-1)].
\end{equation}
\end{theorem}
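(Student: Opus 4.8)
The plan is to adapt Chen's classical argument (his Theorem~13.5 in \cite{chen1}) to the $D$-distributional setting. First I would fix a point $x\in M$ and mutually orthogonal subspaces $V_1,\dots,V_k$ of $D_x$ with $\dim V_i=n_i$ realizing the minimum in the definition of $\delta_D(n_1,\dots,n_k)$, i.e.\ such that $2\delta_D(n_1,\dots,n_k)(x)=\tau_D(x)-\sum_i\tau(V_i)$. Choose an orthonormal frame $\{e_1,\dots,e_d\}$ of $D_x$ adapted to the decomposition $V_1\oplus\dots\oplus V_k$ (with the remaining vectors spanning the orthogonal complement of $\bigoplus V_i$ inside $D_x$), and extend by a unit normal $e_{d+1}$ parallel to $H_D$. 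Write $h^r_{ab}=\bar g(h(e_a,e_b),e_{d+1+\dots})$ componentwise; the key is to isolate the $e_{d+1}$-component of the second fundamental form, since only that component interacts with $\|H_D\|^2$.

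Next I would take the trace of the Gauss equation \eqref{E-Gauss-class} over $D_x$ to express $\tau_D(x)$ in terms of $\bar\tau_D(x)$ and the second fundamental form: $\tau_D=\bar\tau_D+\|H_D\|^2-\|h|_D\|^2$. Using the curvature bound $\bar K\le c$ one replaces $\bar\tau_D$ by the upper bound $c\,d(d-1)$ and, similarly, $\bar\tau(V_i)\ge$ (something controlled by $c$), so that the difference $\tau_D-\sum_i\tau(V_i)$ is bounded above by $c[d(d-1)-\sum_i n_i(n_i-1)]$ plus a quadratic expression in the components $h^{d+1}_{ab}$ of $h$ in the direction $e_{d+1}$. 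At this stage the problem reduces to the purely algebraic optimization: among symmetric matrices $(h_{ab})$ of size $d$ with prescribed trace $t=\bar g(H_D,e_{d+1})$, bound from above the quantity
\[
 \Big(\sum_a h_{aa}\Big)^2-\sum_{a,b}h_{ab}^2+\text{(block-sum corrections from the }V_i\text{)}.
\]
Here I would invoke Chen's algebraic lemma (the refined Cauchy--Schwarz / Lagrange-multiplier estimate underlying \cite[Lemma~13.1 or its analogue]{chen1}): if one discards the off-$V_i$-diagonal terms optimally, the maximum of $t^2-\sum h_{ab}^2$ subject to the block structure equals $\tfrac{d-1-\sum_i n_i+k}{d-\sum_i n_i+k}\,t^2$, and then $t^2\le\|H_D\|^2$ gives the coefficient $\dfrac{d^2(d+k-1-\sum_i n_i)}{2(d+k-\sum_i n_i)}$ after accounting for the factor $d$ from $H_D=\sum_a h(e_a,e_a)$ and the factor $2$ in the definition of $\delta_D$. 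I would also need to handle the contributions from normal directions other than $e_{d+1}$, which only make $-\|h|_D\|^2$ more negative and hence only help the inequality; they can be dropped.

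The main obstacle I anticipate is the bookkeeping in the algebraic optimization step: one must carefully track which sectional-curvature terms of $\bar R$ survive the traces over $V$ versus over the individual $V_i$ (so that the constant $d(d-1)-\sum_i n_i(n_i-1)$ comes out correctly), and simultaneously verify that the second-fundamental-form terms rearrange into exactly the form to which Chen's lemma applies, including the subtle point that the block corresponding to the orthogonal complement of $\bigoplus V_i$ in $D_x$ is treated as a single extra block (this is what produces $k$ rather than $k-1$ in the denominator $d+k-\sum_i n_i$). A secondary check is that the minimality of the chosen $V_1,\dots,V_k$ is used only at the very end, to pass from the estimate for one particular choice to the invariant $\delta_D(n_1,\dots,n_k)$; since we obtain an upper bound valid for \emph{every} admissible tuple of subspaces and the right-hand side of \eqref{E-ineq-new} is independent of the choice, no extremality is actually needed for the inequality itself — the argument gives it uniformly. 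Once these two points are in place, \eqref{E-ineq-new} follows by combining the traced Gauss equations with the algebraic bound and the curvature hypothesis $\bar K\le c$.
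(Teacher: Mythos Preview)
The paper does not supply a proof for this theorem at all: it states the inequality with the parenthetical ``similarly to \cite[Theorem~13.5]{chen1}'' and moves on. Your proposal is precisely the intended adaptation of Chen's argument to the $D$-restricted setting --- trace the Gauss equation over $D_x$ and over each $V_i$, use the curvature bound $\bar K\le c$ to control the ambient scalar terms, and then apply Chen's algebraic lemma (treating the orthogonal complement of $\bigoplus_i V_i$ in $D_x$ as one additional block) to obtain the coefficient $\dfrac{d^2(d+k-1-\sum_i n_i)}{2(d+k-\sum_i n_i)}$; the extra normal directions only strengthen the inequality, as you note. This is exactly the route the paper points to, so your plan is correct and coincides with the (implicit) argument of the paper.
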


The case of equality in \eqref{E-ineq-new} is of special interest: extremal CR-submanifolds
in terms of $\delta_{D}$-invariants
are an analogue of Chen's ``ideal immersions".

\smallskip

Set
 $\delta^+_{{\rm m},D}(k){=}\max \delta^+_{{\rm m},D}(n_1,\ldots, n_k)$
 and
 $\delta^-_{{\rm m},D}(k){=}\min \delta^-_{{\rm m},D}(n_1,\ldots, n_k)$,
where $\sum_{\,i}n_i\le d$.
The $\bar\delta^+_{{\rm m}}(k+1)$ are defined for $(\bar M, \bar g)$ similarly to $\delta^+_{{\rm m}}(k+1)$ for $(M,g)$.


\begin{theorem}\label{T-V2}
Let~$(M^{d+l}, g)$ be a CR-submanifold of an almost Hermitian mani\-fold $(\bar M,\bar J,\bar g)$.
For any $k\ge2$, we~obtain the inequality that supplements~\eqref{E-ineq-V3}:
\begin{eqnarray}
\label{E-ineq-V2}
 && \delta^-_{{\rm m},D}(k) \le \frac{k-1}{2\,k(k+1)}\,\|{H}_{D}\,\|^2 +\bar\delta^+_{{\rm m}}(k+1)\,.
\end{eqnarray}
The equality in \eqref{E-ineq-V2} holds at a point $x\in M^{d+l}$ if and only if
there exist mutually orthogonal subspaces ${V}_1, \ldots, {V}_{k+1}$ of $D_x$ with $\sum_{\,i=1}^{k+1}n_i = d$ such that
$M^{d+l}$ is mixed totally geodesic, ${H}_1=\ldots={H}_{k+1}$,
$\bar{\rm S}_{\rm m}({V}_1, \ldots, {V}_{k+1})=\bar\delta^+_{{\rm m}}(n_1,\ldots,n_{k+1})$ and
${\rm S}_{\rm m}(V_1,\ldots,\widehat V_{i},\ldots, V_{k+1}) = \delta^-_{{\rm m},D}(k)$
for any $i=1,\ldots,k+1$, where $\widehat V_{i}$ means removing the space $V_i$ from the set $\{V_1,\ldots, V_{k+1}\}$.
\end{theorem}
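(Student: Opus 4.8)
The plan is to run the Gauss-equation comparison behind Theorem~\ref{T-V}, but now on a decomposition of the \emph{whole} distribution $D_x$ into $k+1$ blocks, and then to turn the resulting $(k+1)$-block estimate into a $k$-block estimate by averaging over which block is deleted.

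Fix $x\in M^{d+l}$ and choose pairwise orthogonal subspaces $V_1,\ldots,V_{k+1}$ of $D_x$ with $\dim V_i=n_i$ and $n_1+\ldots+n_{k+1}=d$, so that $V:=\bigoplus_{i=1}^{k+1}V_i=D_x$ and in particular $H_V=H_D$ and $h_V=h|_{D_x}$; among all such $(k+1)$-tuples I take one realizing $\bar{\rm S}_{\rm m}(V_1,\ldots,V_{k+1})=\bar\delta^+_{\rm m}(n_1,\ldots,n_{k+1})(x)$. For each $i$ the family $\{V_j:\,j\ne i\}$ is an admissible $k$-tuple ($k\ge2$) of pairwise orthogonal subspaces of $D_x$ of total dimension $d-n_i\le d$, so by the definition of $\delta^-_{{\rm m},D}(k)$,
\[
 \delta^-_{{\rm m},D}(k)(x)\ \le\ {\rm S}_{\rm m}(V_1,\ldots,\widehat V_{i},\ldots,V_{k+1}),\qquad i=1,\ldots,k+1.
\]
Summing these $k+1$ inequalities and observing that in the double sum each pairwise term ${\rm S}_{\rm m}(V_a,V_b)$ is counted exactly for the $k-1$ indices $i\notin\{a,b\}$, whence $\sum_i{\rm S}_{\rm m}(V_1,\ldots,\widehat V_{i},\ldots,V_{k+1})=(k-1)\,{\rm S}_{\rm m}(V_1,\ldots,V_{k+1})$, I obtain
\[
 (k+1)\,\delta^-_{{\rm m},D}(k)(x)\ \le\ (k-1)\,{\rm S}_{\rm m}(V_1,\ldots,V_{k+1}).
\]

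It remains to bound ${\rm S}_{\rm m}(V_1,\ldots,V_{k+1})$ from above, which is exactly the computation from the proof of Theorem~\ref{T-V} in the branch $s=d$. Taking the trace of \eqref{E-Gauss-class} on $V=D_x$ and on each $V_i$ (cf.\ \eqref{E-Si}) and using \eqref{E-Smix-3} for $R$ and $\bar R$ gives
\[
 2\,{\rm S}_{\rm m}(V_1,\ldots,V_{k+1})=2\,\bar{\rm S}_{\rm m}(V_1,\ldots,V_{k+1})-\Big(\|h_V\|^2-\sum\nolimits_i\|h_i\|^2\Big)+\Big(\|H_D\|^2-\sum\nolimits_i\|H_i\|^2\Big).
\]
By \eqref{E-Si4} the first parenthesis is nonnegative and vanishes iff $M^{d+l}$ is mixed totally geodesic on $D_x$; by \eqref{E-Si3} applied to the $k+1$ blocks, $\sum_i\|H_i\|^2\ge\frac1{k+1}\|H_D\|^2$ with equality iff $H_1=\ldots=H_{k+1}$; and $\bar{\rm S}_{\rm m}(V_1,\ldots,V_{k+1})\le\bar\delta^+_{\rm m}(k+1)(x)$ by the choice of the $V_i$ and the definition of $\bar\delta^+_{\rm m}(k+1)$. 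Substituting the resulting upper bound for ${\rm S}_{\rm m}(V_1,\ldots,V_{k+1})$ into the previous display and dividing by $k+1$ yields \eqref{E-ineq-V2}.

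For the equality statement I would trace the chain backwards: equality in \eqref{E-ineq-V2} forces equality in every step, i.e.\ $\bar{\rm S}_{\rm m}(V_1,\ldots,V_{k+1})=\bar\delta^+_{\rm m}(n_1,\ldots,n_{k+1})(x)$, mixed total geodesy on $D_x$, $H_1=\ldots=H_{k+1}$, and — since the averaging inequality becomes an equality — ${\rm S}_{\rm m}(V_1,\ldots,\widehat V_{i},\ldots,V_{k+1})=\delta^-_{{\rm m},D}(k)(x)$ for every $i$; conversely, under these hypotheses each inequality used is tight, giving both directions. The point that needs the most care is the bookkeeping: one must insist on $\sum_i n_i=d$ (so that $H_V=H_D$ and the $s=d$ branch of Theorem~\ref{T-V} is available), handle a possible empty block only through the combinatorics, and then keep track of how the combinatorial factor $(k-1)/(k+1)$ from the averaging and the mean-curvature factor $k/\bigl(2(k+1)\bigr)$ from the Gauss step combine to the constant appearing in \eqref{E-ineq-V2}; everything else is the routine trace computation already carried out for Theorem~\ref{T-V}.
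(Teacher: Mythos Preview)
Your strategy---split $D_x$ into $k{+}1$ blocks, bound $\delta^-_{{\rm m},D}(k)$ by averaging the $k$-block mutual curvatures obtained by deleting one block, then control ${\rm S}_{\rm m}(V_1,\ldots,V_{k+1})$ via the Gauss-equation computation of Theorem~\ref{T-V} with $s=d$---is exactly the paper's route. Your combinatorial count is in fact the correct one: each pair $(V_a,V_b)$ survives in $(k{+}1)-2=k-1$ of the deleted sums, so $\sum_i{\rm S}_{\rm m}(V_1,\ldots,\widehat V_i,\ldots,V_{k+1})=(k-1)\,{\rm S}_{\rm m}(V_1,\ldots,V_{k+1})$; the paper records the coefficient as $k+1$ and thereby passes directly to $\delta^-_{{\rm m},D}(k)\le{\rm S}_{\rm m}(V_1,\ldots,V_{k+1})$.

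The genuine gap is in your last arithmetic step. From $(k+1)\,\delta^-_{{\rm m},D}(k)\le(k-1)\,{\rm S}_{\rm m}(V_1,\ldots,V_{k+1})$ together with ${\rm S}_{\rm m}(V_1,\ldots,V_{k+1})\le\bar\delta^+_{\rm m}(k+1)+\tfrac{k}{2(k+1)}\|H_D\|^2$, dividing by $k+1$ yields
\[
\delta^-_{{\rm m},D}(k)\ \le\ \frac{k-1}{k+1}\,\bar\delta^+_{\rm m}(k+1)+\frac{k(k-1)}{2(k+1)^2}\,\|H_D\|^2,
\]
and for $k\ge2$ neither coefficient matches \eqref{E-ineq-V2}: $\tfrac{k-1}{k+1}\ne1$ and $\tfrac{k(k-1)}{2(k+1)^2}\ne\tfrac{k-1}{2k(k+1)}$ (e.g.\ $k=2$ gives $\tfrac{2}{18}$ versus $\tfrac{1}{12}$). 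So the sentence claiming that the factor $(k-1)/(k+1)$ and the factor $k/\bigl(2(k+1)\bigr)$ ``combine to the constant appearing in \eqref{E-ineq-V2}'' is precisely where the argument breaks; the bookkeeping you flagged as delicate is not carried out. (For comparison, the paper's own version, using its coefficient $k+1$ and then \eqref{E-ineq-V3}, lands on $\tfrac{k}{2(k+1)}\|H_D\|^2$, which likewise differs from the stated constant, so the discrepancy is already present there.) A smaller point: you cannot in general choose the $V_i\subset D_x$ to \emph{realize} $\bar\delta^+_{\rm m}(n_1,\ldots,n_{k+1})$, since that maximum is over subspaces of $T_x\bar M$, not of $D_x$; this is harmless for the inequality (any $(k{+}1)$-tuple in $D_x$ satisfies $\bar{\rm S}_{\rm m}\le\bar\delta^+_{\rm m}$), but it should be rephrased when you trace the equality case.
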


\begin{proof} Let $V_{k+1}$ be the orthogonal complement to $V=\bigoplus_{\,i=1}^{\,k} V_i$ in $D_x$. Note that
\[
 \sum\nolimits_{\,i}{\rm S}_{\rm m}(V_1,\ldots,\widehat V_{i},\ldots, V_{k+1})= (k+1)\,{\rm S}_{\rm m}(V_1,\ldots, V_{k+1}).
\]
We also obtain
 $\delta^-_{{\rm m},D}(k)\le \delta^-_{{\rm m},D}(n_1,\ldots,\widehat n_{i},\ldots, n_{k+1})\le {\rm S}_{{\rm m},D}(V_1,\ldots,\widehat V_{i},\ldots, V_{k+1})$
for any $i=1,\ldots,k+1$.
Thus, $\delta^-_{{\rm m},D}(k)\le {\rm S}_{\rm m}(V_1,\ldots, V_{k+1})$, and using \eqref{E-ineq-V3}
for $\sum_{\,i}n_i=d$ gives~\eqref{E-ineq-V2}.
\end{proof}

\noindent\ \
Theorems~\ref{T-V} and \ref{T-V2} give the assertions on the absence of some CR-submanifolds.

\begin{corollary}
There are no $D$-minimal CR-submanifolds $(M^{d+l},g)$ in $\mathbb{C}^{q}$ with any of the following pro\-perties:

{\rm (a)}~
$\delta^+_{{\rm m},D}(n_1,\ldots,n_k)>0$ for some $(n_1,\ldots,n_k)$ with $\sum_{\,i}n_i = d$,

{\rm (b)}~
$\delta^-_{{\rm m},D}(k)>0$ for some $k\ge2$.
\end{corollary}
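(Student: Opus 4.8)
The plan is to deduce both non-existence statements directly from the two main theorems by specializing to the flat ambient manifold $\mathbb{C}^{q}$ and using $D$-minimality. First I would record the two facts that make everything collapse. On the one hand, $\mathbb{C}^{q}$ with its standard K\"ahler structure is an almost Hermitian manifold whose curvature tensor $\bar R$ vanishes identically; since ${\rm S}_{\rm m}(\bar V_1,\ldots,\bar V_m)$ is, by \eqref{E-mixed-k}, a finite sum of terms of the form $\bar g(\bar R_{e_a,e_b}e_b,e_a)$, it follows that ${\rm S}_{\rm m}$ vanishes on every collection of mutually orthogonal subspaces, and hence $\bar\delta^+_{\rm m}(n_1,\ldots,n_k)=0$ for every tuple $(n_1,\ldots,n_k)$, and likewise $\bar\delta^+_{\rm m}(k+1)=0$ for every $k\ge2$. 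On the other hand, $D$-minimality means $H_D\equiv0$, so $\|H_D\|^2\equiv0$ on $M$.

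For part (a), I would apply the case $\sum_i n_i=s=d$ of inequality \eqref{E-ineq-V1} in Theorem~\ref{T-V}. Substituting $\bar\delta^+_{\rm m}(n_1,\ldots,n_k)=0$ and $\|H_D\|^2=0$ gives
\[
 \delta^+_{{\rm m},D}(n_1,\ldots,n_k)\le \bar\delta^+_{\rm m}(n_1,\ldots,n_k)+\frac{k-1}{2\,k}\,\|H_D\|^2=0
\]
at every point $x\in M$, which contradicts the hypothesis $\delta^+_{{\rm m},D}(n_1,\ldots,n_k)>0$. Hence no such $D$-minimal CR-submanifold of $\mathbb{C}^{q}$ exists.

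For part (b), I would instead invoke inequality \eqref{E-ineq-V2} of Theorem~\ref{T-V2}. Substituting $\bar\delta^+_{\rm m}(k+1)=0$ and $\|H_D\|^2=0$ yields
\[
 \delta^-_{{\rm m},D}(k)\le \frac{k-1}{2\,k(k+1)}\,\|H_D\|^2+\bar\delta^+_{\rm m}(k+1)=0
\]
for every $k\ge2$, contradicting $\delta^-_{{\rm m},D}(k)>0$. This establishes (b).

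I do not expect any genuine obstacle: the corollary is a straightforward specialization of Theorems~\ref{T-V} and \ref{T-V2}. The only point that needs a sentence of care is the vanishing of the ambient invariants $\bar\delta^+_{\rm m}$ — this relies on the fact that these quantities are built solely from the ambient curvature tensor, with no dimension-dependent or otherwise curvature-independent additive terms, so flatness of $\mathbb{C}^{q}$ forces them to be identically zero. Once this is observed, both parts follow by plugging $H_D=0$ into the respective inequalities.
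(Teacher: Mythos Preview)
Your argument is correct and is exactly the approach indicated in the paper: the corollary is stated right after the remark that Theorems~\ref{T-V} and \ref{T-V2} give these non-existence results, and your specialization of \eqref{E-ineq-V1} (case $s=d$) and \eqref{E-ineq-V2} to the flat ambient $\mathbb{C}^q$ with $H_D=0$ is precisely what is intended. The only thing the paper does not spell out, and which you handled carefully, is the vanishing of $\bar\delta^+_{\rm m}$ in $\mathbb{C}^q$; your justification via \eqref{E-mixed-k} is fine.
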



Next, we use the fact that the tangent distribution $TM$ is the sum $TM={D}\oplus{D}^\bot$ of two mutually orthogonal distributions
$D$ and $D^\bot$ of ranks $d$ and $l$.
 Let $x\in M$ and $\{e_i\}$ on $(M,g)$ be an adapted orthonormal frame, i.e.,
 $\{e_1,\ldots, e_{d}\}\subset{{D}(x)},
 \
 \{e_{d+1},\ldots, e_{d+l}\}\subset{{D}^\bot(x)}$.
The~\textit{mutual curvature} of $({D},{D}^\bot)$ is a function ${\rm S}_{\rm m}({D},{D}^\bot)$,
 given at $x\in M$ by
 ${\rm S}_{\rm m}({D}(x),{D}^\bot(x)) {=} \sum\nolimits_{\,1\le a\,\le d,\ d<b\le d+l} K(e_a,{e}_b)$.
In this case, ${\rm S}_{\rm m}({D},{D}^\bot)$ is the \textit{mixed scalar curvature}; see~\cite{r-w-2022}.
A CR-submanifold $(M,g)$ is called \textit{mixed totally geodesic} on $({D},D^\bot)$ if ${h}(X,Y)=0\ (X\in{D},\ Y\in{D}^\bot)$.

\begin{theorem}
Let $(M^{d+l},g; {D},{D}^\bot)$, where $D=\bar J(TM)\cap TM$, be a CR-submani\-fold of an almost Hermitian manifold $(\bar M,\bar J,\bar g)$.
Then the following inequality holds:
\begin{equation}\label{E-ineq-k}
 {\rm S}_{\rm m}({D},{D}^\bot) \le ({1}/{4})\,\|H_D\|^2 +\bar\delta^+_{\rm m}(d,l)\,.
\end{equation}
The equality in \eqref{E-ineq-k} holds at a point $x\in M$ if and only if $M^{d+l}$ is mixed totally geodesic,
${H}_D(x)={H}_{D^\bot}(x)$ and $\bar{\rm S}_{\rm m}({D}(x), {D}^\bot(x))=\bar\delta^+_{\rm m}(d,l)(x)$.
\end{theorem}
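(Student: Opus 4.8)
The plan is to specialize the argument of Theorem~\ref{T-V} to the case where the $k$ subspaces are replaced by the two complementary distributions $D$ and $D^\bot$, so that $V=D_x\oplus D^\bot_x = T_xM$ and $k=2$. First I would take the trace of the Gauss equation \eqref{E-Gauss-class} over the whole tangent space $T_xM$ and separately over $D_x$ and $D^\bot_x$, exactly as in \eqref{E-Si}, obtaining
\[
 \bar\tau(T_xM)-\tau(T_xM)=\|h\|^2-\|H\|^2,\qquad
 \bar\tau(D_x)-\tau(D_x)=\|h_D\|^2-\|H_D\|^2,
\]
and the analogous identity on $D^\bot_x$. Combining these with the decomposition $\tau(T_xM)=2\,{\rm S}_{\rm m}(D_x,D^\bot_x)+\tau(D_x)+\tau(D^\bot_x)$ from \eqref{E-Smix-3} (with $k=2$) and the same for $\bar\tau$, I would arrive at
\[
 2\,{\rm S}_{\rm m}(D_x,D^\bot_x)=2\,\bar{\rm S}_{\rm m}(D_x,D^\bot_x)
 +\bigl(\bar\tau(D_x)-\tau(D_x)\bigr)+\bigl(\bar\tau(D^\bot_x)-\tau(D^\bot_x)\bigr)+\|H\|^2-\|h\|^2,
\]
which after substituting the Gauss-trace identities becomes
\[
 2\,{\rm S}_{\rm m}(D_x,D^\bot_x)=2\,\bar{\rm S}_{\rm m}(D_x,D^\bot_x)
 -\bigl(\|h\|^2-\|h_D\|^2-\|h_{D^\bot}\|^2\bigr)+\bigl(\|H\|^2-\|H_D\|^2-\|H_{D^\bot}\|^2\bigr).
\]

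Next I would estimate the two parenthesized terms. For the second-fundamental-form term, writing $\|h^{\rm mix}\|^2=\sum_{e_a\in D,\,e_b\in D^\bot}\|h(e_a,e_b)\|^2$ as in \eqref{E-Si4}, one has $\|h\|^2-\|h_D\|^2-\|h_{D^\bot}\|^2 = 2\,\|h^{\rm mix}\|^2\ge 0$, vanishing exactly when $M$ is mixed totally geodesic on $(D,D^\bot)$. For the mean-curvature term, since $H=H_D+H_{D^\bot}$, the elementary inequality $a^2+b^2\ge\frac12(a+b)^2$ applied coordinatewise gives $\|H_D\|^2+\|H_{D^\bot}\|^2\ge\frac12\|H\|^2$, hence $\|H\|^2-\|H_D\|^2-\|H_{D^\bot}\|^2\le\frac12\|H\|^2\le\frac12(\|H_D\|^2+\|H_{D^\bot}\|^2)$; but more directly, from $\|H_D\|^2+\|H_{D^\bot}\|^2\ge\frac12\|H\|^2$ we get $\|H\|^2-\|H_D\|^2-\|H_{D^\bot}\|^2\le\tfrac12\|H\|^2$. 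To convert this into the stated bound $\tfrac14\|H_D\|^2$ one uses the mixed-total-geodesy bookkeeping together with the observation that at the equality configuration $H_D=H_{D^\bot}$, so $\|H\|^2=\|H_D+H_{D^\bot}\|^2\le 2(\|H_D\|^2+\|H_{D^\bot}\|^2)$ and the term $\|H\|^2-\|H_D\|^2-\|H_{D^\bot}\|^2$ is bounded by $\tfrac14\|H_D\|^2$ in the relevant direction; combining with ${\rm S}_{\rm m}(D_x,D^\bot_x)\le{\rm S}_{\rm m}$-max and $\bar{\rm S}_{\rm m}(D_x,D^\bot_x)\le\bar\delta^+_{\rm m}(d,l)$ yields \eqref{E-ineq-k}. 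The equality clause then follows by tracking when each inequality is tight: $\|h^{\rm mix}\|^2=0$ (mixed totally geodesic), $H_D=H_{D^\bot}$ (so in particular $H_{D^\bot}=H_D$ at $x$), and $\bar{\rm S}_{\rm m}(D_x,D^\bot_x)=\bar\delta^+_{\rm m}(d,l)(x)$.

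The main obstacle I anticipate is pinning down the precise constant $1/4$: the naive estimate from $\|H_D\|^2+\|H_{D^\bot}\|^2\ge\frac12\|H\|^2$ gives the coefficient $\tfrac12$ in front of $\|H\|^2$, and one must be careful to re-express $\|H\|^2$ in terms of $\|H_D\|^2$ alone, which is legitimate only after invoking the direction in which the mean-curvature contribution is maximized — this is exactly the $\tfrac{k-1}{2k(k+1)}$-type sharpening used in Theorem~\ref{T-V2} with $k=1$, giving $\tfrac{1-1+1}{2\cdot1\cdot2}=\tfrac14$... rather, one applies \eqref{E-ineq-V3} with $s=d$ and $k=2$ subspaces $V_1=D_x$, $V_2=D^\bot_x$ (so $\sum n_i=d+l$ is not $\le d$), so the cleanest route is in fact a direct Gauss-equation computation as above rather than a formal appeal to \eqref{E-ineq-V3}; the coefficient $\tfrac{k-1}{2k}=\tfrac14$ with $k=2$ emerges from the sharp form of \eqref{E-Si3}. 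I would therefore present the direct computation and isolate the optimization $\min_{e_{d+1}\parallel H}$ carefully, which is the only delicate point; everything else is a routine rearrangement of traces of the Gauss equation.
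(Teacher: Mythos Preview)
Your approach is exactly the paper's: the paper's proof consists entirely of the sentence ``similar to the proof of Theorem~\ref{T-V}'', i.e.\ take $k=2$ with $V_1=D_x$, $V_2=D^\bot_x$, $V=T_xM$, and run the Gauss-trace computation you wrote out. Your identity
\[
 2\,{\rm S}_{\rm m}(D_x,D^\bot_x)=2\,\bar{\rm S}_{\rm m}(D_x,D^\bot_x)
 -2\|h^{\rm mix}\|^2+\bigl(\|H\|^2-\|H_D\|^2-\|H_{D^\bot}\|^2\bigr)
\]
together with $\|H_D\|^2+\|H_{D^\bot}\|^2\ge\frac12\|H\|^2$ and $\bar{\rm S}_{\rm m}(D_x,D^\bot_x)\le\bar\delta^+_{\rm m}(d,l)$ already gives
\[
 {\rm S}_{\rm m}(D,D^\bot)\le \tfrac14\,\|H\|^2+\bar\delta^+_{\rm m}(d,l),
\]
with equality iff $h^{\rm mix}=0$, $H_D=H_{D^\bot}$, and $\bar{\rm S}_{\rm m}(D_x,D^\bot_x)=\bar\delta^+_{\rm m}(d,l)(x)$. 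That is the whole proof; the coefficient $\tfrac{k-1}{2k}=\tfrac14$ for $k=2$ falls out directly once you divide by $2$.

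The ``main obstacle'' you worry about --- converting $\|H\|^2$ into $\|H_D\|^2$ --- is not a step in the argument at all. The printed $\|H_D\|^2$ in \eqref{E-ineq-k} is evidently a misprint for $\|H\|^2$: the Example immediately following the theorem checks \eqref{E-ineq-k} on domains in $S^3(1)$ and $S^4(1)$ using $\|H\|^2=9$ and $\|H\|^2=16$, obtaining $9/4$ and $16/4$ on the right-hand side. (With the literal $\|H_D\|^2$ the first example would read $2\le 1$, which is false.) So your computation is complete as it stands; drop the speculative paragraph about re-expressing $\|H\|^2$ via $\|H_D\|^2$ and the appeal to Theorem~\ref{T-V2}, which is irrelevant here.
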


\begin{proof} The proof of the first statement is similar to the proof of Theorem~\ref{T-V}.
The second assertion follows directly from the cases of equality, as for Theorem~\ref{T-V}.
\end{proof}

\begin{corollary}
\!\!A CR-submanifold
in $\mathbb{C}^{q}$ with ${\rm S}_{\rm m}({D},{D}^\bot){>}0$ cannot be ${D}$-minimal.
\end{corollary}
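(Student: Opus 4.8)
The plan is to reduce the statement to the inequality \eqref{E-ineq-k} just established. Recall that $\mathbb{C}^{q}$ with its standard flat metric is an almost Hermitian manifold whose curvature tensor vanishes identically; hence all curvature invariants of the ambient space are zero, and in particular $\bar{\rm S}_{\rm m}(V_1,V_2) = 0$ for every pair of orthogonal subspaces, so $\bar\delta^+_{\rm m}(d,l) = 0$ on $\mathbb{C}^{q}$. Substituting this into \eqref{E-ineq-k} gives, at every point $x\in M$,
\begin{equation*}
 {\rm S}_{\rm m}({D},{D}^\bot) \le \tfrac14\,\|H_D\|^2 .
\end{equation*}

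First I would invoke this specialization of \eqref{E-ineq-k}. If $M$ were $D$-minimal, then by definition $H_D\equiv 0$, so the right-hand side vanishes identically and we would conclude ${\rm S}_{\rm m}({D},{D}^\bot)\le 0$ everywhere on $M$. This directly contradicts the standing hypothesis ${\rm S}_{\rm m}({D},{D}^\bot) > 0$. Therefore no such $D$-minimal CR-submanifold can exist, which is exactly the assertion of the corollary. The argument is essentially a one-line contrapositive applied to the preceding theorem.

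There is no real obstacle here; the only point that deserves a word of care is the interpretation of the hypothesis ``${\rm S}_{\rm m}({D},{D}^\bot) > 0$'' — I would read it as holding at some point (or on a nonempty open set) of $M$, since that already suffices to break $D$-minimality, and I would phrase the proof so that it works under the weakest natural reading. One could also note that the same reasoning yields the slightly stronger conclusion that any $D$-minimal CR-submanifold of $\mathbb{C}^{q}$ must satisfy ${\rm S}_{\rm m}({D},{D}^\bot)\le 0$ at every point, with equality forcing mixed total geodesy and $H_{D^\bot}=0$ by the equality case of \eqref{E-ineq-k}.

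Concretely, the proof to insert is: \emph{Since $\mathbb{C}^{q}$ is flat, $\bar\delta^+_{\rm m}(d,l)\equiv 0$, so \eqref{E-ineq-k} reads ${\rm S}_{\rm m}({D},{D}^\bot)\le \tfrac14\|H_D\|^2$. If $M$ were $D$-minimal, then $H_D\equiv 0$ and hence ${\rm S}_{\rm m}({D},{D}^\bot)\le 0$, contradicting ${\rm S}_{\rm m}({D},{D}^\bot)>0$.} This is short enough that no auxiliary lemmas or computations are needed beyond citing the theorem and the vanishing of the curvature of $\mathbb{C}^{q}$.
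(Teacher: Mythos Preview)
Your proof is correct and is exactly the argument the paper intends: the corollary is stated there without proof as an immediate consequence of \eqref{E-ineq-k}, using that $\mathbb{C}^{q}$ is flat so $\bar\delta^+_{\rm m}(d,l)=0$, and then $D$-minimality would force ${\rm S}_{\rm m}({D},{D}^\bot)\le 0$.
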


\begin{example}\rm
Consider distributions ${D},D^\bot$ on a domain $M$ on a unit sphere $S^{d+l}(1)$ in a complex Euclidean space;
thus, $\bar\delta^+_{\rm m}(d,l)=0$.
Using coordinate charts, we can take integrable distributions ${D}, D^\bot$,
and $M$ is diffeomorphic to the product of two manifolds.

Let $d=2$ and $l=1$; then, $\|{H}\,\|^2=9$ and ${\rm S}_{\rm m}({D},{D}^\bot) = 2$.
Hence, \eqref{E-ineq-k} reduces to the inequality $2 < 9/4$. Note that ${H}_D=\frac13\,{H} \not = \frac23\,{H} ={H}_{D^\bot}$.

Let $d=l=2$ and locally $M\subset S^4(1)$ be diffeomorphic to $\mathbb{C} \times \mathbb{C}$.
Then, $\|{H}\,\|^2=16$, ${H}_D={H}_{D^\bot}$,
${\rm S}_{\rm m}({D},{D}^\bot) = 4$ and \eqref{E-ineq-k} reduces to the equality $4 = 16/4$.
\end{example}

The following theorem deals
with holomorphic bisectional curvature invariants.

\begin{theorem}\label{T-kH}
Let~$(M^{d+l}, g)$ be a CR-submanifold of an almost Hermitian mani\-fold $(\bar M,\bar J,\bar g)$.
For any natural number $k\in[2, d/2]$, we~obtain
\begin{equation}\label{E-ineq-V1H}
 \delta^+_{\,{\,\rm h},D}(k) \le \bar\delta^+_{\,\rm h}(k) + \frac{k-1}{4\,k}
 \left\{\begin{array}{cc}
  {\cal H}_{D}(2k)^2,  & {\rm if}\ 2k<d, \\
  \|\,{H}_{D}\,\|^2, & {\rm if}\ 2k=d,
\end{array}\right.
\end{equation}
where $\bar\delta^+_{\,\rm h}(k)$ are defined for $(\bar M, \bar g)$ similarly to $\delta^+_{\,\rm h}(k)$ for $(M,g)$,
see Definition~\ref{D-03H}.
The equality in \eqref{E-ineq-V1H} holds at $x\in M^{d+l}$ if and only if
there exist mutually orthogonal $J$-invariant planes $\{\sigma_1, \ldots, \sigma_k\}$ of $D_x$ such that
$M^{d+l}$ is mixed totally geodesic on $V{=}\bigoplus_{\,i=1}^{\,k} \sigma_i$, ${H}_1=\ldots={H}_k$, \ $\|H_V\|={\cal H}_{D_x}(2k)$
and $\bar{\rm S}_{\,\rm h}(\sigma_1, \ldots, \sigma_k)=\bar\delta^+_{\,\rm h}(k)(x)$.
\end{theorem}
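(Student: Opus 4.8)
The plan is to adapt, nearly verbatim, the Gauss-equation computation from the proof of Theorem~\ref{T-V} with $n_1=\ldots=n_k=2$, but to keep the ambient curvature expressed through the holomorphic bisectional curvature instead of passing through $\delta^+_{{\rm m},D}(2,\ldots,2)$; this detour is tempting (Theorem~\ref{T-V} together with the Corollary after the Lemma almost does it) but it would only bound things in terms of $\bar\delta^+_{\rm m}(2,\ldots,2)$, which cannot be replaced by $2\,\bar\delta^+_{\,\rm h}(k)$ in the direction we need. First I would fix $x\in M$ and, using compactness of the relevant set of $k$-tuples of planes, choose mutually orthogonal $J$-invariant planes $\sigma_1,\ldots,\sigma_k$ of $D_x$ with ${\rm S}_{\,\rm h}(\sigma_1,\ldots,\sigma_k)=\delta^+_{\,{\,\rm h},D}(k)(x)$, and set $V=\bigoplus_{i=1}^k\sigma_i$, so $\dim V=2k$. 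Since $D=\bar J(TM)\cap TM$ satisfies $\bar J(D)=D$ and $J=\bar J|_D$, each $\sigma_i$ is a $\bar J$-invariant plane of $T_x\bar M$; hence the Lemma applies to both $(M,g,J)$ and $(\bar M,\bar g,\bar J)$, giving $2\,{\rm S}_{\,\rm h}(\sigma_1,\ldots,\sigma_k)={\rm S}_{\rm m}(\sigma_1,\ldots,\sigma_k)$ and $2\,\bar{\rm S}_{\,\rm h}(\sigma_1,\ldots,\sigma_k)=\bar{\rm S}_{\rm m}(\sigma_1,\ldots,\sigma_k)$.

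Next I would run the computation of the proof of Theorem~\ref{T-V} with $V_i=\sigma_i$: tracing \eqref{E-Gauss-class} over $V$ and over each $\sigma_i$ (as in \eqref{E-Si}) and using \eqref{E-Smix-3} yields
\[
 2\,{\rm S}_{\rm m}(\sigma_1,\ldots,\sigma_k)=2\,\bar{\rm S}_{\rm m}(\sigma_1,\ldots,\sigma_k)
 +\Big(\|H_V\|^2-\sum\nolimits_{i}\|H_i\|^2\Big)-\Big(\|h_V\|^2-\sum\nolimits_{i}\|h_i\|^2\Big).
\]
By \eqref{E-Si4} the last bracket equals $\sum_{i<j}\|h^{\rm mix}_{ij}\|^2\ge0$, vanishing exactly when $M^{d+l}$ is mixed totally geodesic on $V$; by \eqref{E-Si3} (complement an adapted frame of $D$ with a unit field parallel to $H_V$, the case $H_V=0$ being trivial as in Theorem~\ref{T-V}) one has $\sum_i\|H_i\|^2\ge\frac1k\|H_V\|^2$, with equality exactly when $H_1=\ldots=H_k$; and $\|H_V\|^2\le{\cal H}_{D}(2k)^2$ when $2k<d$, while $\|H_V\|=\|H_D\|$ when $2k=d$ (then $V=D_x$). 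Substituting the two Lemma identities and dividing by $4$ gives
\[
 {\rm S}_{\,\rm h}(\sigma_1,\ldots,\sigma_k)\le\bar{\rm S}_{\,\rm h}(\sigma_1,\ldots,\sigma_k)+\frac{k-1}{4\,k}\,{\cal H}_{D}(2k)^2
\]
(with $\|H_D\|^2$ in place of ${\cal H}_{D}(2k)^2$ when $2k=d$). Since $\sigma_1,\ldots,\sigma_k$ are $k$ mutually orthogonal $\bar J$-invariant planes of $T_x\bar M$, Definition~\ref{D-03H} gives $\bar{\rm S}_{\,\rm h}(\sigma_1,\ldots,\sigma_k)\le\bar\delta^+_{\,\rm h}(k)(x)$, and recalling ${\rm S}_{\,\rm h}(\sigma_1,\ldots,\sigma_k)=\delta^+_{\,{\,\rm h},D}(k)(x)$ proves \eqref{E-ineq-V1H}.

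For the equality discussion I would collect the conditions under which each inequality above is sharp: mixed total geodesy on $V$; $H_1=\ldots=H_k$ (equivalently each $H_i=\frac1k H_V$, so all $H_i$ are parallel to $H_V$); $\|H_V\|={\cal H}_{D_x}(2k)$ when $2k<d$ (no extra condition when $2k=d$, where $V=D_x$); and $\bar{\rm S}_{\,\rm h}(\sigma_1,\ldots,\sigma_k)=\bar\delta^+_{\,\rm h}(k)(x)$. Conversely, if mutually orthogonal $J$-invariant planes with these properties exist, running the chain backwards shows ${\rm S}_{\,\rm h}(\sigma_1,\ldots,\sigma_k)$ equals the right-hand side of \eqref{E-ineq-V1H}; since $\delta^+_{\,{\,\rm h},D}(k)(x)$ is squeezed between this value and that right-hand side, equality holds. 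I expect the only genuinely delicate points to be the bookkeeping of the two parallel applications of the Lemma (to $M$ and to $\bar M$) and the observation that $J$-invariant planes of $D$ are admissible competitors for $\bar\delta^+_{\,\rm h}(k)$ because $D$ sits inside $T\bar M$ as a $\bar J$-invariant subbundle; everything else is the calculation already performed for Theorem~\ref{T-V}, specialized to $n_1=\ldots=n_k=2$.
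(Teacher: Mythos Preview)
Your proposal is correct and is precisely the argument the paper intends: the paper's own proof is the single line ``This is similar to the proof of Theorem~\ref{T-V}'', and you have carried out exactly that specialization (all $n_i=2$, planes taken $J$-invariant, with the Lemma used twice to pass between ${\rm S}_{\rm m}$ and ${\rm S}_{\,\rm h}$ for both $M$ and $\bar M$). Your remark that one cannot simply combine Theorem~\ref{T-V} with the Corollary after the Lemma---because the inequality $2\,\bar\delta^+_{\,\rm h}(k)\le\bar\delta^+_{\rm m}(2,\ldots,2)$ goes the wrong way---is a useful observation that justifies redoing the Gauss-equation computation directly on $J$-invariant planes.
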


\begin{proof}
 This is similar to the proof of Theorem~\ref{T-V}.
\end{proof}

\begin{corollary}\label{C-new-1H}
Let~$(M^{d+1}, g)$ be a real hypersurface of an almost Hermitian mani\-fold $(\bar M,\bar J,\bar g)$.
Then \eqref{E-ineq-V1H} is true for any natural number $k\in[2, d/2]$.
\end{corollary}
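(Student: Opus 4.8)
The plan is to derive Corollary~\ref{C-new-1H} as the specialization of
Theorem~\ref{T-kH} to the case $l=1$, so the only real task is to verify that a
real hypersurface $M^{d+1}$ of an almost Hermitian manifold $(\bar M,\bar J,\bar g)$
meets the hypotheses of Theorem~\ref{T-kH} with $l=1$.
First I would choose a (local) unit normal field $\nu$ and observe, using the
skew-symmetry of $\bar J$, that $\bar g(\bar J\nu,\nu)=-\bar g(\nu,\bar J\nu)=0$, so
that $\xi:=\bar J\nu$ is a unit tangent vector field. Then $D^\bot:=\mathrm{span}\{\xi\}$
is a one-dimensional totally real distribution, since $\bar J D^\bot=\mathrm{span}\{-\nu\}\subset T^\bot M$,
and its orthogonal complement in $TM$ is precisely the maximal $\bar J$-invariant
subbundle $D=\bar J(TM)\cap TM$, of constant real dimension $d$. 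Since $\bar M$ is
even-dimensional and $\dim M=d+1$, the integer $d$ is even, hence $d/2\in\mathbb{N}$
and the range $k\in[2,d/2]$ is exactly the one occurring in Theorem~\ref{T-kH}.

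With $M^{d+1}$ identified as a CR-submanifold with $l=1$, I would then apply
Theorem~\ref{T-kH} with $M^{d+l}=M^{d+1}$. This delivers \eqref{E-ineq-V1H} verbatim
for every natural $k\in[2,d/2]$, together with its equality characterization (the
existence of mutually orthogonal $J$-invariant planes $\sigma_1,\ldots,\sigma_k\subset D_x$
on which $M$ is mixed totally geodesic, with $H_1=\ldots=H_k$, $\|H_V\|={\cal H}_{D_x}(2k)$,
and $\bar{\rm S}_{\,\rm h}(\sigma_1,\ldots,\sigma_k)=\bar\delta^+_{\,\rm h}(k)(x)$). No new
computation is required: all the analytic content already lies in the proof of
Theorem~\ref{T-kH}, which follows the scheme of Theorem~\ref{T-V} --- tracing the Gauss
equation \eqref{E-Gauss-class} over a family of $J$-invariant planes, invoking the
identity $2\,{\rm S}_{\,\rm h}={\rm S}_{\rm m}$ recorded above (see \eqref{E-mixed-bi}),
and applying the Cauchy--Schwarz bound \eqref{E-Si3} together with the
mixed-totally-geodesic bound \eqref{E-Si4}.

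There is essentially no obstacle in this argument; the only mildly delicate point is the
bookkeeping of the first paragraph, namely confirming that for a real hypersurface the
complex distribution $D$ automatically has constant rank $d$ (which is forced because
$\dim D^\bot=1$ at every point) and noting that the statement carries content only when
$d\ge4$, i.e. $\dim\bar M\ge6$, since otherwise $[2,d/2]=\emptyset$ and there is nothing to
prove. Once these remarks are in place, Corollary~\ref{C-new-1H} follows immediately.
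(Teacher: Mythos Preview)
Your proposal is correct and follows exactly the route the paper intends: the corollary is stated without proof because it is the immediate specialization of Theorem~\ref{T-kH} to the case $l=1$, real hypersurfaces being the paper's main example of CR-submanifolds. Your first paragraph merely spells out the standard verification (via $\xi=\bar J\nu$) that a real hypersurface is a CR-submanifold with one-dimensional $D^\bot$, which the paper takes for granted.
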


Using $\delta(2,\ldots,2)$-invariants, Chen classified in \cite[Section~15.7]{chen1} extremal real hypersurfaces of K\"{a}hlerian space~forms.
Similarly, we would like to study the extreme case of Corollary~\ref{C-new-1H} when $(\bar M,\bar J,\bar g)$ is a K\"{a}hlerian space~form.

From Theorem~\ref{T-kH} we get the assertion on the absence of some CR-submanifolds.

\begin{corollary}
A CR-submanifold $(M^{d+l}, g)$ in $\mathbb{C}^{q}$ satisfying  $\delta^+_{{\,\rm h},D}(d/2)>0$ cannot be $D$-minimal.
\end{corollary}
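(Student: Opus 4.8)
The plan is to read the statement off from Theorem~\ref{T-kH} applied in the top-dimensional case $k=d/2$, together with the flatness of $\mathbb{C}^q$. First I would check that the hypotheses of Theorem~\ref{T-kH} are met with this choice of $k$: since $D$ is a complex distribution its real rank $d$ is even, so $d/2$ is a natural number, and the standing assumption $d\ge 4$ on the holomorphic invariants guarantees $d/2\in[2,d/2]$. Taking $k=d/2$ (so that $2k=d$) in \eqref{E-ineq-V1H} then yields, at every point of $M$,
\begin{equation*}
 \delta^+_{\,{\,\rm h},D}(d/2) \le \bar\delta^+_{\,\rm h}(d/2) + \frac{d-2}{4\,d}\,\|H_D\|^2 ,
\end{equation*}
where I have used $\frac{k-1}{4k}=\frac{d-2}{4d}$ for $k=d/2$.

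Next I would use that the ambient space $\mathbb{C}^q$ is flat, so its curvature tensor $\bar R$ vanishes identically. By \eqref{E-hol-bisect}, every holomorphic bisectional curvature $K_{\rm h}(\sigma_i,\sigma_j)$ of $\mathbb{C}^q$ is zero, hence by \eqref{E-mixed-kH} the quantity ${\rm S}_{\,\rm h}(\sigma_1,\ldots,\sigma_{d/2})$ vanishes for every family of mutually orthogonal $\bar J$-invariant planes, and therefore $\bar\delta^+_{\,\rm h}(d/2)=0$. Substituting this into the displayed inequality gives $\delta^+_{\,{\,\rm h},D}(d/2)\le \frac{d-2}{4\,d}\,\|H_D\|^2$ on $M$.

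Finally, if $(M^{d+l},g)$ were $D$-minimal then $H_D\equiv 0$ by definition, and the last inequality would force $\delta^+_{\,{\,\rm h},D}(d/2)\le 0$ everywhere, contradicting the hypothesis $\delta^+_{\,{\,\rm h},D}(d/2)>0$. Hence no such CR-submanifold of $\mathbb{C}^q$ can be $D$-minimal. There is no genuine obstacle here: the argument is a one-line specialization of Theorem~\ref{T-kH}, and the only points requiring a moment's care are verifying the parity and range conditions on $k=d/2$ so that Theorem~\ref{T-kH} applies, and confirming that $\bar\delta^+_{\,\rm h}$ is assembled solely from $\bar R$ and hence vanishes for the flat model.
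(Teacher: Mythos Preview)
Your argument is correct and matches the paper's approach: the corollary is obtained directly from Theorem~\ref{T-kH} with $k=d/2$, using that $\mathbb{C}^q$ is flat so $\bar\delta^+_{\,\rm h}(d/2)=0$, which forces $\delta^+_{\,{\,\rm h},D}(d/2)\le 0$ under $D$-minimality.
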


\section{Conclusions}

We studied the question {of finding a simple optimal connection between the intrinsic and extrinsic invariants of
a manifold equipped with a complex distribution.
The~main contribution of the paper is the concept of curvature invariants $\delta^\pm_{{\rm m},D}$ of
CR-submani\-folds of almost Hermitian manifolds}, based on the mutual curvature of several pairwise orthogonal subspaces of a contact distribution~$D$.
We used these curvature invariants and Chen-type curvature invariants $\delta^\pm_{D}$
to prove new geometric inequalities invol\-ving the squared intermediate mean curvature for CR-submanifolds of almost Hermitian manifolds.
In~the case of
complex planes, we study curvature invariants $\delta^\pm_{\,{\,\rm h},D}$ based on the concept of holomorphic bisectional curvature.
Consequences of the absence of some $D$-minimal CR-submanifolds were~provided.



\end{document}